\newtheorem{thm}{Theorem}[section]
\newtheorem{cor}[thm]{Corollary}
\newtheorem{prop}[thm]{Proposition}
\newtheorem{lemma}[thm]{Lemma}
\theoremstyle{definition}
\newtheorem{deff}[thm]{Definition}
\newtheorem{es}[thm]{Example}
\newtheorem{n}[thm]{Notation}
\newtheorem{introteo}{Theorem}
\newtheorem{quest}[introteo]{Question}
\theoremstyle{remark}
\newtheorem{remark}[thm]{Remark}
\DeclareMathOperator{\trop}{trop}
\newcommand{\val}{\mathfrak{v}}
\let\c@equation\c@thm
\numberwithin{equation}{section}
\newcommand{\PP}{\mathbb{P}}
\newcommand{\ZZ}{\mathbb{Z}}
\newcommand{\tf}{\textbf}
\DeclareMathOperator{\codim}{codim}
  \DeclareMathOperator{\Grp}{\mathbb G}
\DeclareMathOperator{\conv}{conv}
\DeclareMathOperator{\F}{F}
\DeclareMathOperator{\pos}{pos}
\title{Tropical Fano schemes}
\author{Sara Lamboglia}
\address{
Institut f\"ur Mathematik\\
 Goethe-Universit\"at Frankfurt\\
 Robert-Mayer-Str. 6-8\\
  60325 Frankfurt a. M.\\
   Germany\\
   }
   \email{lamboglia@math.uni-frankfurt.de}
\date{April 4, 2019}
\begin{document}

\begin{abstract}
We define a tropical version $\F_d(\trop X)$ of the Fano Scheme $\F_d(X)$ of a projective variety $X\subseteq \mathbb P^n$ and  prove that $\F_d(\trop X)$ is the support of a polyhedral complex contained in $\trop \Grp(d,n)$.
In general  $\trop \F_d(X)\subseteq \F_d(\trop X)$ but we construct linear spaces $L$ such that
$\trop \F_1(X)\subsetneq \F_1(\trop X)$
 and  show that   for a toric variety  $\trop \F_d(X)=\F_d(\trop X)$. 
\end{abstract}

\maketitle


\markleft{}

\section{Introduction}

The classical Fano scheme of a projective variety $X\subseteq \mathbb P^n$ is the fine moduli space parametrising linear spaces  contained in $X$. It is denoted by  $\F_d(X)$, with $d$  the dimension of the linear spaces, and  is a subscheme of the Grassmannian  
$\Grp(d,n)$ of $d-$dimensional subspaces of $\mathbb P^n$. Fano schemes have been intensively studied because of 
their geometric properties.  Gino Fano \cite{Fano} first introduced these schemes  and  mostly considered the case of hypersurfaces. Then in the $70s$
 these schemes  have been  used to prove results on the irrationality of cubic threefolds \cite{CG,MURRE}. Recently there has been new interests for Fano schemes not only  in algebraic geometry 
  \cite{I-C,I-Z,I-S,I} but also in machine learning \cite{L-K} and geometric complexity theory \cite{Mulm}.

In this paper we study a tropical version of the Fano scheme. We investigate the structure of this tropical object  and  relations with the classical $F_d(X)$.

The first way of obtaining   a tropical version of $\F_d(X)$ is to consider its  tropicalization inside $\trop \Grp(d,n)$.  The points of  $\trop \F_d(X)$ are in correspondence with   the tropicalization of  the classical linear spaces  contained in $X$. However it is not true in general that a tropicalized linear space that lies in  $\trop X$ is the tropicalization of a classical linear space in $X$.  A famous example of this is  in \cite{Vig} where Vigeland proves that there are  smooth surfaces  in $\mathbb P^3$  of degree $3$ whose tropicalization contains infinitely many lines. Since there are only $27$  lines in the classical surfaces we deduce that these infinite tropical lines do not come from their tropicalization.

This leads us to  define  the  second  tropical version  of $\F_d(X)$ to be   the set  of tropicalized linear spaces  of dimension $d$ contained in $\trop X$. We call this the \textit{ tropical Fano scheme} and we denote it by $\F_d(\trop X)$.   We take the first steps in studying the structure and the properties of this object that can also be used   to investigate the classical Fano scheme.   

\begin{introteo}\label{theorem1}
Let $X$ be a  projective variety in ${\mathbb P}^n$. Then  the  tropical  Fano scheme $\F_d(\trop X)$ is  a polyhedral complex whose support is contained in    $\trop \Grp(d,n)$. Moreover if $X $ is a fan then $\F_d(\trop X)$ is a fan.
  \end{introteo}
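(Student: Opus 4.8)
The plan is to realize $\F_d(\trop X)$ as a subcomplex of $\trop\Grp(d,n)$ by constructing a tautological family of tropical linear spaces over $\trop\Grp(d,n)$ and then isolating, cell by cell, the locus where the fibre is contained in the fixed polyhedral complex $\trop X$.

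First I would build the universal tropical linear space. For a tropical Plücker vector $p$ the tropical linear space $\trop L_p$ is cut out by the tropical circuit relations: for each $(d+2)$-subset $T$ the minimum of $p_{T\setminus j}+x_j$ over $j\in T$ is attained at least twice. Regarding $(p,x)$ jointly as variables, each such relation is the bend locus of a function that is tropically linear in its arguments, hence a polyhedral complex; intersecting these over all $T$ and with $\trop\Grp(d,n)$ gives a polyhedral complex $\mathcal U\subseteq \trop\Grp(d,n)\times\RR^{n+1}$ whose fibre under the linear projection $\pi\colon\mathcal U\to\trop\Grp(d,n)$ over a point $p$ is exactly $\trop L_p$.

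Next I would choose a common polyhedral refinement. Over each relatively open cell $\sigma$ of a suitable subdivision of $\trop\Grp(d,n)$ the combinatorial type of $\trop L_p$ is constant and its faces are given by vertices and ray directions depending affine-linearly on $p$; this is the standard constancy of the matroid subdivision, equivalently of $\init_p$, along the relative interior of a cell. Refining further --- using that $\trop X$ has only finitely many cells --- I would arrange that along $\sigma$ the incidence pattern of $\trop L_p$ with the cells of $\trop X$ is also constant. On such a $\sigma$ the containment $\trop L_p\subseteq\trop X$ is equivalent to requiring each maximal face of $\trop L_p$ to lie in the cell of $\trop X$ prescribed by the fixed pattern, which is a finite conjunction of affine-linear equalities and inequalities on $p$, hence cuts out a polyhedron inside $\sigma$. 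The set $\F_d(\trop X)=\bigcup_\sigma\{p\in\sigma:\trop L_p\subseteq\trop X\}$ is therefore a finite union of polyhedra which, by construction, glue along faces into a polyhedral complex with support inside $\trop\Grp(d,n)$.

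The main obstacle is precisely the second refinement step: controlling how the moving complex $\trop L_p$ meets the fixed complex $\trop X$ so that containment becomes a polyhedral condition in $p$. Concretely one must show that the fibrewise-containment locus of one polyhedral family inside a fixed polyhedral complex is itself polyhedral --- the polyhedral analogue of constructibility of the locus parametrising subvarieties contained in a fixed variety --- and verify the required finiteness of combinatorial types. For the final assertion, when $\trop X$ is a fan the scaling $p\mapsto\lambda p$ for $\lambda>0$ satisfies $\trop L_{\lambda p}=\lambda\,\trop L_p$, so by scale-invariance of $\trop X$ the containment condition is scale-invariant; all the polyhedra produced above are then cones through the origin and $\F_d(\trop X)$ is a fan.
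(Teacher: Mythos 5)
Your reduction to strata $C_T$ of $\trop\Grp(d,n)$ on which the cells of $\Gamma_p$ depend affine-linearly on $p$ matches the paper's setup, but the step you yourself flag as the main obstacle is exactly what remains unproven, and your proposed fix fails as stated. You cannot in general refine so that ``the incidence pattern of $\Gamma_p$ with the cells of $\trop X$ is constant'' and then express containment as ``each maximal face of $\Gamma_p$ lies in the prescribed cell of $\trop X$'': a face of $\Gamma_p$ contained in $\trop X$ typically crosses several cells of $\trop X$ and lies in no single one, so containment is containment in a \emph{union} of polyhedra, which is not a conjunction of affine-linear conditions on $p$; and the subdivision of $\Gamma_p$ induced by the cells of $\trop X$ itself moves with $p$, which is the circularity you ran into. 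The paper closes this gap with a projection--complement (quantifier-elimination) argument for semilinear sets: for each cell $C_i(p)$ of $\Gamma_p$ it forms the escape locus $\tilde{F}_{C_i}=\{(p,x)\in C_T\times\trop\PP^n : x\in C_i^{\circ}(p),\ x\notin\trop X\}$. Since the complement of a polyhedron is a finite union of open polyhedra, $\tilde{F}_{C_i}$ is a finite union of open polyhedra; the image of an open polyhedron under the linear projection to the $p$-coordinates is again an open polyhedron; and the locus $F_{C_i}=\{p\in C_T : C_i^{\circ}(p)\subseteq\trop X\}$ is the complement of $\pi(\tilde{F}_{C_i})$ in $C_T$, hence closed and a finite union of polyhedra. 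Intersecting over $i$ and taking the union over the finitely many types $T$ gives the theorem. This is precisely the polyhedral ``constructibility'' statement you asked for, and without it (or an equivalent quantifier-elimination input) your proof is incomplete.

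Two smaller points. First, you work only over $\trop\Grp(d,n)\times\RR^{n+1}$, i.e.\ over the torus part, whereas the theorem concerns all of $\trop\Grp(d,n)\subseteq\trop\PP^{\binom{n+1}{d+1}-1}$; the paper runs the argument orbit by orbit, using the identification (due to Corey--Cueto) of $\trop\Grp(d,n)\cap\mathcal O$ with a product of a smaller tropical Grassmannian with tropical tori, and your argument needs the analogous extension to boundary orbits. Second, your scaling argument for the fan statement ($\Gamma_{\lambda p}=\lambda\Gamma_p$ for $\lambda>0$, so scale-invariance of $\trop X$ forces scale-invariance of $\F_d(\trop X)$) is correct and in fact slicker than the paper's route, which instead observes that when the relevant $\trop X\cap\mathcal O'$ is a fan the sets $F_T$ produced by the construction are unions of cones --- but it only yields a fan once the polyhedrality gap above is filled.
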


The two  tropical versions of the Fano scheme come from two different constructions. The first is strictly linked to the algebraic variety and to its classical Fano scheme while the other only depends on the tropical variety $\trop X$. However we immediately observe that
\begin{equation}\label{cont}
 \trop \F_d(X)\subseteq\F_d(\trop X)
\end{equation}
and since  Theorem \ref{theorem1}   allows us to define a  dimension  for  $\F_d(\trop X)$ we obtain a  bound for the dimension of $F_d(X)$.
A natural question arises:
\begin{quest}\label{question}
For which varieties $X$ do we have $\trop \F_d(X)=\F_d(\trop X)$?
\end{quest}

We start by looking at the simplest algebraic varieties:  linear subspaces of $\mathbb P^n$. We then analyse the case of toric varieties embedded in  $\mathbb P^n$ via monomial maps. These are two examples where the tropicalization can be easily described. For a linear space $L$ the tropicalization is computed from the matroid associated to $L$. On the other hand a monomial map can be tropicalized to a linear map from $\mathbb R^r$ to $\mathbb R^n$ and its image is  the tropicalization of the toric variety associated to the monomial map (\cite[Corollary 3.2.13]{M-S}).

\begin{introteo}\label{introte}\textcolor{white}{ciap}
\begin{enumerate}
\item Let $n\ge 5$. If $L$ is a  generic $2-$dimensional plane  in $\mathbb P^{n}$  then  $$\trop \F_1(L)\subsetneq \F_1(\trop L).$$
\item If $X$ is a toric  variety in $\mathbb P^n$ then  $\F_d(\trop X)=\trop \F_d(X)$.
\end{enumerate}

\end{introteo}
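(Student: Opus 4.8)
The plan is to treat the two parts separately, since they pull in opposite directions: part (1) exhibits a failure of lifting, whereas part (2) establishes lifting. For (1), I would first pin down the two sides numerically. Since $L\cong\PP^2$, its classical Fano scheme of lines is the dual plane, so $\dim\F_1(L)=2$ and hence $\trop\F_1(L)$ is a $2$-dimensional balanced subcomplex of $\trop\Grp(1,n)$. On the other side, genericity of $L$ means that $\trop L$ is the Bergman fan of the uniform matroid $U_{3,n+1}$, i.e.\ the set of $w\in\RR^{n+1}/\RR\mathbf 1$ at most two of whose coordinates exceed the minimum. I would then parametrize tropical lines in $\PP^n$ by their underlying tree type together with the bounded-edge lengths and a translation, and translate the requirement ``contained in $\trop L$'' into explicit inequalities on these coordinates: each vertex and each ray must keep at most two coordinates strictly above the minimum. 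The goal is to produce, for $n\ge 5$, a tropical line $\ell\subseteq\trop L$ carrying a bounded edge in a direction that no tropicalized classical line in $L$ can realize, and to show that the family of such $\ell$ already has dimension $>2$, so that it cannot be swept out by the $2$-dimensional $\trop\F_1(L)$. The threshold $n\ge 5$ is exactly what is needed to have enough coordinates available to build the extra bounded edge inside the Bergman fan of $U_{3,n+1}$.

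The main obstacle in (1) is the non-liftability argument: producing tropical lines in $\trop L$ is purely combinatorial, but proving that a given one is \emph{not} the tropicalization of any classical line in $L$ requires an obstruction. I would obtain it either from the dimension comparison above, or, for a single explicit $\ell$, by checking that the valuated matroid forced by $\ell$ is not realized by any line lying on $L$ --- equivalently, that the corresponding tropical Pl\"ucker vector does not lift to a point of $\F_1(L)$ over the valued field.

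For (2), I would use that $X$ toric forces $\trop X=V$ to be an honest rational linear subspace, by \cite[Corollary 3.2.13]{M-S}, and that $X$ is cut out by binomials. By the inclusion \eqref{cont} it suffices to prove $\F_d(\trop X)\subseteq\trop\F_d(X)$, i.e.\ that every tropical $d$-plane $P\subseteq V$ is the tropicalization of a classical $d$-plane \emph{contained in} $X$. Since $P$ is a tropical linear space it is realizable over the field of Puiseux series by some $d$-plane $\Lambda$ with $\trop\Lambda=P$; the content is to arrange $\Lambda\subseteq X$ rather than merely $\trop\Lambda\subseteq\trop X$. Here I would exploit the linearity of $V$ together with the binomial generators $x^{u^+}-x^{u^-}$ of the toric ideal: the hypothesis $P\subseteq V$ says that each tropical binomial condition already holds identically on $P$, so the valuation-zero data of $\Lambda$ is unconstrained by these binomials, and one can adjust the coefficients of $\Lambda$ within its valuated-matroid class --- keeping $\trop\Lambda=P$ --- until every binomial vanishes, placing $\Lambda$ inside the torus-stable variety $X$.

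The main obstacle in (2) is precisely this lifting step, which is the mirror image of the failure in (1): the toric structure must be used in an essential way, because for general $X$ the inclusion \eqref{cont} is strict, as part (1) shows. The crux is to verify that the binomial equations impose no further, non-tropical obstruction once $P\subseteq V$, so that the coordinate or torus-translate linear space realizing $P$ can indeed be taken inside $X$; carrying out the coefficient adjustment \emph{simultaneously} for all binomial generators, without leaving the tropicalization class $P$, is the technical heart of the argument.
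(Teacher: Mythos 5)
Your part (1) is a correct strategy, but it is genuinely different from the paper's. The paper (Theorem \ref{counterexample}) fixes one explicit tropical line $\Gamma\subseteq\mathcal L$ and obstructs realizability synthetically: any classical line $\ell\subseteq L_p$ with $\trop\ell=\Gamma$ must meet the orbits $O_{k,k+1}$, hence pass through the points $w_{k,k+1}$ of the coordinate line arrangement, and genericity (condition (II)) forbids those points from being collinear once $n\ge 5$. Your route is a dimension count: $\F_1(L)$ is the dual plane, so $\trop\F_1(L)$ is a $2$-dimensional complex, while $\F_1(\trop L)$ contains a $3$-dimensional cone, namely the snowflake lines with center at the origin of the Bergman fan of $U_{3,n+1}$ and bounded edges of arbitrary lengths in the directions $\tf e_0+\tf e_1$, $\tf e_2+\tf e_3$, $\tf e_4+\tf e_5$ (each cell of such a line stays inside a cone $\pos(\tf e_i,\tf e_j)$ of $\mathcal L$, and every metric tree with $n+1$ labelled leaves is a point of $\trop\Grp(1,n)$ by \cite{Speyer}). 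A $3$-dimensional cone cannot lie in a finite union of $2$-dimensional polyhedra, so the containment is strict; this matches the paper's computation in Example \ref{saras} ($15$ three-dimensional snowflake cones) and in fact proves the statement for \emph{every} plane with $\trop L=\mathcal L$, a stronger conclusion than the paper's. Two caveats: you must actually exhibit the $3$-dimensional family (straightforward, but missing from your sketch), and your proposed obstruction ``a bounded edge in a direction that no tropicalized classical line in $L$ can realize'' is false as stated --- the $2$-dimensional faces of the snowflake cones do lie in $\trop\F_1(L)$, so those edge directions are realized; what cannot be covered is the full three-parameter family.

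Part (2) has a genuine gap. The entire content of $\F_d(\trop X)\subseteq\trop\F_d(X)$ is the lifting step, and your proposal for it --- ``adjust the coefficients of $\Lambda$ within its valuated-matroid class until every binomial vanishes'' --- is an assertion, not an argument. The hypothesis $\Gamma\subseteq\trop X$ only tells you that each binomial exponent $u$ is orthogonal to the linear space $V=\trop(X\cap T^n)$, i.e.\ that $x^{u^+}$ and $x^{u^-}$ have equal valuation along $\Lambda$; it gives no mechanism for making all the binomials vanish identically on a single $d$-plane while preserving $\trop\Lambda=\Gamma$, and no perturbation scheme is offered. What the paper does (Proposition \ref{Cay} and Theorem \ref{equality}) is structural rather than perturbative: the indicator vectors $\tf e_{F_1^i}$ of the minimal flats of the matroid $M_\Gamma$ are recession directions of $\Gamma$, hence lie in $V$, the row span of $A$; this produces a partition of $\mathcal A$, i.e.\ a Cayley structure, which by \cite{I-Z} yields a coordinate linear space $L$ (cut out by differences $x_i-x_j$) that is \emph{classically} contained in $X$, with $\Gamma$ contained in a torus translate $\trop(t\cdot L)$; one then realizes $\Gamma$ inside $t\cdot L\subseteq X$ by transporting a realization through the coordinate projection $L\cong\PP^l$. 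The point your proposal misses is that $\Gamma\subseteq V$ does far more than impose tropical binomial conditions: through the matroid of $\Gamma$ it pins $\Gamma$ inside a translate of a coordinate subspace that already sits inside $X$, and this is exactly what makes lifting possible. Without this identification (or an equivalent one), your ``simultaneous coefficient adjustment'' is precisely the statement to be proved.
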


The paper is structured as  follows. In Section \ref{sec1}   we define the tropical Fano scheme and we give a rigorous statement of Theorem \ref{theorem1} (Theorem \ref{Fanstructure}  and Corollary \ref{fanProperty}). We study the case of linear spaces in Section \ref{counter}. We prove the first part of  Theorem \ref{introte} in Theorem \ref{counterexample} and then  use it to prove the strict containment in (\ref{cont}) for a generic hypersurface.
In Section  \ref{toricv}  we analyse the case of toric varieties and we prove  the second part of Theorem \ref{introte} (Theorem \ref{equality}).
Finally in Section \ref{proofThm} we study the structure of $\F_d(\trop X)$. 

\subsection*{Acknowledgements}
The author  would like to thank Diane Maclagan  for useful suggestions and  a close reading, 
Nathan Ilten, Paolo Tripoli, 
Mar\'ia Ang\'elica Cueto  and Annette Werner  for helpful discussions and Melody Chan for her valuable comments that lead to the  improvement of  the final version of this paper.
The author was supported by EPSRC grant 1499803 and partially  by LOEWE research unit USAG.

\section{Definitions  of $\F_d(\trop X)$}\label{sec1}

In this section we set  notation and  define the tropical Fano Scheme $\F_d(\trop X)$ of the tropicalization of a projective variety $ X\subseteq \mathbb P^n$.\\

Let  $\Bbbk $ be a field with a surjective valuation $\val:\Bbbk^* \to \mathbb R$ (cf. Remark \ref{nonsurval}) and let $T^m$ be the torus $(\Bbbk^*)^{m+1}/\Bbbk^*$ contained in $\mathbb P^m$.
The tropical projective space $\trop \mathbb P^m$  is $(\overline{\mathbb R}^{m+1}\setminus \{(\infty,\ldots,\infty )\})/\mathbb R \textbf 1$ where $\overline{\mathbb R}$ denotes $ \mathbb R\cup\{\infty\}$ and $\mathbb R\textbf 1$ is the linear space spanned by the vector $(1,\ldots ,1)$. Let  $ O$ be a  $T^m$-orbit of $\mathbb P^m$. This is   the locus of points in $\mathbb P^m$ where $x_i=0$ for  every $i$ in the subset $I$ of all coordinates  and  $x_i\neq 0$ for $i\notin I$.
Its tropicalization  $\mathcal O :=\trop O$ is the locus of points $(x_0,\ldots,x_n)$ in
  $\trop \mathbb P^m$ where $x_i=\infty$ if and only if   $i\in I$. We refer to $ \mathcal O$ as an orbit of $\trop \mathbb P^m$. 

For any  projective variety $Y\subseteq \mathbb P^m$  the  tropicalization $\trop Y$  is given by the union of $\trop Y\cap \mathcal O:=\trop (Y\cap O)$ where $O$ is the unique orbit of $\mathbb P^m$ such that $\trop O=\mathcal O$ (see Section 6 in \cite{M-S}). If $Y$ is irreducible  and  $O$  is such that $\dim \overline{Y\cap O}=\dim Y$ then   $ Y\subseteq \overline O$ and $\trop Y=\overline{\trop Y\cap \mathcal O}$ in $\trop \mathbb P^m$ \cite[Theorem 6.2.18]{M-S}.\\

Let $\Grp(d,n)$ be the Grassmannian parametrising $d$-dimensional projective  subspaces in $\mathbb P^n$. We consider it  embedded via the Pl\"ucker map into $\PP^{\binom{n+1}{d+1}-1}$. Its tropicalization  $\trop \Grp(d,n)\subseteq \trop \PP^{\binom{n+1}{d+1}-1} $ parametrises tropicalized linear spaces of dimension $d$ in $\trop \mathbb P^n$ (\cite[Theorem 3.8]{Speyer}, \cite[Theorem 4.3.17 and Remark 4.4.2]{M-S},\cite{CC}). Hence it is possible to associate to each point $p$ of $\trop \Grp(d,n)$  a unique tropicalized linear space which we denote by $\Gamma_p$.
\begin{n}
Given two tropical varieties $\trop X,\trop Y$ we write 
$\trop X\subseteq \trop Y$ for the containment of the support of $\trop X$ in the support of $\trop Y$.
\end{n}

\begin{deff}
The \textit{tropical Fano scheme} is the set  $\F_d(\trop X)\subseteq \trop \Grp(d,n) $ defined by $$ \F_d(\trop X ):=\{p\in  \trop \Grp(d,n)  : \Gamma_p\subseteq \trop X \}. $$
\end{deff}

 In Section \ref{proofThm}  we prove   the following results:

\begin{thm}\label{Fanstructure}
	Let $X$ be a  projective variety in ${\mathbb P}^n$ and  $\mathcal O$ be an orbit  of $\trop \mathbb P^{\binom{n+1}{d+1}-1}$. Then   $\F_d(\trop X)\cap \mathcal O$ is  a polyhedral complex  whose support is contained in  the intersection   $\trop \Grp(d,n)\cap \mathcal O$.
\end{thm}
\begin{cor}\label{fanProperty}
	Consider a non empty intersection $\F_d(\trop X)\cap \mathcal O$
	and let $\mathcal O'$ be the unique  orbit of $\trop \mathbb P^n$  such that $\overline{\Gamma_p\cap \mathcal O'}=\Gamma_p$ for all $p\in \F_d(\trop X)\cap \mathcal O$. Then  $\F_d(\trop X)\cap \mathcal O$ is a fan if  $\trop X\cap \mathcal O' $ is  a fan.
\end{cor}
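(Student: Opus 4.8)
The plan is to derive the statement from Theorem \ref{Fanstructure}, which I may assume. That theorem already gives that $\F_d(\trop X)\cap\mathcal O$ is a polyhedral complex supported in $\trop\Grp(d,n)\cap\mathcal O$, so the only new content is to promote ``polyhedral complex'' to ``fan'' once $\trop X\cap\mathcal O'$ is one. I would use the scaling action $q\mapsto\lambda q$, $\lambda\in\RR_{>0}$, of tropical projective space: it is well defined on $\RR^{n+1}/\RR\mathbf{1}$ and preserves every orbit. A polyhedral complex whose support is invariant under this action admits a fan structure, so it suffices to prove that the support of $\F_d(\trop X)\cap\mathcal O$ is a cone, i.e.\ closed under positive scaling.

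First I would record that $\mathcal O'$ is genuinely constant on $\mathcal O$: fixing the orbit $\mathcal O$ fixes the support of the Pl\"ucker vector of $p$, hence the underlying matroid $M_p$ of $\Gamma_p$, and $\mathcal O'$ is exactly the orbit of $\trop\mathbb P^n$ whose $\infty$-coordinates are the loops of $M_p$; this is precisely the orbit with $\overline{\Gamma_p\cap\mathcal O'}=\Gamma_p$. Next I would reduce the containment to $\mathcal O'$: since $\Gamma_p=\overline{\Gamma_p\cap\mathcal O'}$ and $\trop X$ is closed, $\Gamma_p\subseteq\trop X$ holds if and only if $\Gamma_p\cap\mathcal O'\subseteq\trop X\cap\mathcal O'$, so the whole condition is read off inside $\mathcal O'$ and uses only the hypothesised fan $\trop X\cap\mathcal O'$. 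Finally, the relations defining a point of $\Gamma_p$ --- that for every $(d+2)$-subset $S$ the minimum $\min_{j\in S}(p_{S\setminus j}+q_j)$ is attained at least twice --- are homogeneous under the simultaneous scaling $(p,q)\mapsto(\lambda p,\lambda q)$, whence $\Gamma_{\lambda p}=\lambda\,\Gamma_p$, and scaling preserves both $\mathcal O$ and $\mathcal O'$.

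Combining these, if $\trop X\cap\mathcal O'$ is a fan then $\lambda(\trop X\cap\mathcal O')=\trop X\cap\mathcal O'$ for $\lambda>0$, and for $p\in\mathcal O$ I obtain
\[
p\in\F_d(\trop X)\iff \Gamma_p\cap\mathcal O'\subseteq\trop X\cap\mathcal O'\iff \Gamma_{\lambda p}\cap\mathcal O'\subseteq\trop X\cap\mathcal O'\iff \lambda p\in\F_d(\trop X),
\]
where the middle equivalence applies $\lambda$ to both sides and uses $\Gamma_{\lambda p}=\lambda\,\Gamma_p$ together with the invariance of $\trop X\cap\mathcal O'$. Hence $\F_d(\trop X)\cap\mathcal O$ is invariant under positive scaling and, by Theorem \ref{Fanstructure} and the opening remark, is a fan. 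I expect the delicate points to be twofold: establishing $\Gamma_{\lambda p}=\lambda\,\Gamma_p$, i.e.\ that the scaling matching the fan structure of $\trop\Grp(d,n)$ is the one compatible with the family of the $\Gamma_p$ (this is exactly where homogeneity of the incidence relations is needed), and the final implication ``scaling-invariant polyhedral complex is a fan''. An alternative that sidesteps the latter is to re-run the proof of Theorem \ref{Fanstructure} in the category of fans: under the present hypothesis the base $\trop\Grp(d,n)\cap\mathcal O$, the universal family of the $\Gamma_p$, and $\trop X\cap\mathcal O'$ are all fans and the relevant projection is cone-wise linear, so the subcomplex cut out is automatically a subfan.
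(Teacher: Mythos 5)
Your proof is correct, but it takes a genuinely different route from the paper's. The paper proves the corollary by reopening the proof of Theorem \ref{Fanstructure}: when $\trop X\cap\mathcal O'$ is a fan, every ingredient of that construction (the cones $C_T$, the cells of $\Gamma_p$ cut out by conditions linear in $(p,x)$, and the cells of $\trop X\cap \mathcal O'$) is homogeneous, so each set $F_T$ is a finite union of cones rather than of general polyhedra, and the common refinement of these cones gives a fan structure on $\F_d(\trop X)\cap\mathcal O$; this is exactly the ``alternative'' you sketch in your final sentences. Your main argument instead treats Theorem \ref{Fanstructure} as a black box (polyhedrality of the support) and adds one global geometric fact: invariance of the support under the $\RR_{>0}$-scaling action. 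The three steps you use for this are all sound: $\Gamma_{\lambda p}=\lambda\,\Gamma_p$ follows from homogeneity of the tropical incidence relations (positive scaling of $(p,x)$ preserves which terms attain the minimum, also when some coordinates are $\infty$); the reduction $\Gamma_p\subseteq\trop X\iff\Gamma_p\cap\mathcal O'\subseteq\trop X\cap\mathcal O'$ holds because $\Gamma_p=\overline{\Gamma_p\cap\mathcal O'}$ and $\trop X$ is closed in $\trop\mathbb P^n$; and a fan's support is scaling invariant. What your route buys is modularity: the hypothesis enters only through scaling invariance, and you never re-examine the sets $F_T$, $F_{C_i}$, $\tilde F_{C_i}$. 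What it costs is the lemma you flag but do not prove: that a finite union of polyhedra $S=\bigcup_i P_i$ which is $\RR_{>0}$-invariant carries a fan structure. This is true and needs only a short argument that you should include to make the proof self-contained: since $S$ is closed and $\RR_{>0}$-invariant, $S=\bigcup_i\overline{\RR_{>0}P_i}$, and each $\overline{\RR_{>0}P_i}=\RR_{>0}P_i\cup\mathrm{rec}(P_i)$ is a polyhedral cone (it is the projection of the homogenization $\{(x,\lambda):Ax\le\lambda b,\ \lambda\ge 0\}$ of $P_i$), so $S$ is a finite union of polyhedral cones and the common refinement of these cones and their faces is a fan with support $S$. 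With that lemma written out, your proof is complete and is a legitimate, arguably cleaner, substitute for the paper's argument.
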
 

\begin{remark}
Note that  $\trop \F_d( X)$ does not have the same property described in Proposition \ref{fanProperty}. There are varieties $X\subseteq \mathbb P^n$ such that $\trop (X\cap T^n)$ is a fan but $\trop \F_d(X)\cap \trop T^{\binom{n+1}{d+1}-1}$ is not. In the next section we give an explicit  example of this (Example \ref{nonfan}).
\end{remark}

\section{Linear spaces and generic hypersurfaces}\label{counter}

In this section we show that  there exist linear spaces and hypersurfaces for which the  containment $\trop \F_1(X)\subseteq \F_1(\trop X)$ is strict.
In Theorem \ref{counterexample} we prove that  if  $n\ge 5$ and  $L$ is a generic plane in $\mathbb P^n$ then   there exists a tropical line in $\trop L$ that is not realizable in $L$. We then  compute an  explicit example of a plane $L\subseteq \mathbb P^5$ with this property and we show that  $\dim \trop \F_1(L)<\dim  \F_1(\trop L)$.
 Finally in Proposition \ref{LinearObstruction} 
we prove that the containment is strict for a \textit{general} hypersurface $X$ whose tropicalization has the same support as a tropical hyperplane.

\begin{thm}\label{counterexample}
Let $n\ge 5$. There  exists a semi-algebraic  set in $\Grp(2,n)$ whose points are planes $L\subseteq \mathbb P^{n}$ such that $\trop \F_1(L)\subsetneq \F_1(\trop L)$.
\end{thm}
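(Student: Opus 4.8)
The plan is to pin down the combinatorial type of $\trop L$ by working with planes in general position, and then to exhibit inside $\trop L$ a family of tropical lines of dimension strictly larger than $\dim\F_1(L)=2$; strictness of the containment in (\ref{cont}) is then forced by a dimension count. First I would take $L$ to be a generic $2$-plane, so that the matroid of $L$ is the uniform matroid $U_{3,n+1}$ and, since the tropicalization of a linear space is read off from its matroid, $\trop L$ is the Bergman fan of $U_{3,n+1}$, a pure $2$-dimensional fan in $\trop\PP^n$. The conditions that the underlying matroid be uniform and that the Pl\"ucker vector realise this Bergman tropicalization are open conditions on the Pl\"ucker coordinates and therefore cut out the required semi-algebraic subset of $\Grp(2,n)$. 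On this set the classical Fano scheme $\F_1(L)$ is the dual plane $(\PP^2)^{\ast}$ parametrising lines in $L\cong\PP^2$, so $\dim\trop\F_1(L)=\dim\F_1(L)=2$.

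The heart of the argument is to produce a family of tropical lines contained in the $2$-dimensional fan $\trop L$ of dimension at least $3$. I would realise such a line as a tree with $n+1$ unbounded rays, one in each coordinate direction of $\trop\PP^n$, together with one or more bounded edges, embedded in $\trop L$ so that each edge runs along a cone of the Bergman fan and the balancing condition holds at every vertex. The essential feature, available precisely when $n\ge 5$, is that the Bergman fan of $U_{3,n+1}$ is rich enough to contain such a tree in which the length of a bounded edge may be varied freely, and the vertices moved, while the tree continues to satisfy the balancing conditions and to remain inside $\trop L$; the bounded-edge lengths together with the vertex positions then furnish at least three independent moduli. I would verify this cone by cone, checking that throughout the deformation the tree stays inside the fan, so that every member of the family is a genuine point of $\F_1(\trop L)$.

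Granting the construction, strictness follows at once: $\F_1(\trop L)$ contains a family of dimension at least $3$, whereas $\trop\F_1(L)$ has dimension $2$, so the inclusion $\trop\F_1(L)\subseteq\F_1(\trop L)$ of (\ref{cont}) must be strict. Concretely, any tropical line in the constructed family lying outside the $2$-dimensional set $\trop\F_1(L)$ is a tropical line contained in $\trop L$ that is not the tropicalization of a classical line of $L$. Since the genericity conditions hold on the whole semi-algebraic set, this proves $\trop\F_1(L)\subsetneq\F_1(\trop L)$ for every $L$ in that set.

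The main obstacle is the explicit construction of the tropical line inside the Bergman fan together with the verification that the entire family stays within $\trop L$: one must exhibit a tree that embeds with all edges on cones of the Bergman fan of $U_{3,n+1}$, check that balancing is preserved at each internal vertex as the edge length and positions are deformed, and confirm that the dimension of the resulting family is indeed at least $3$. This is also the step that accounts for the hypothesis $n\ge 5$, since for smaller $n$ the fan $\trop L$ is too low-dimensional relative to its number of rays to accommodate a bounded edge of freely varying length, and the only tropical lines in $\trop L$ are the realisable ones coming from $(\PP^2)^{\ast}$.
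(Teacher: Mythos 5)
Your strategy---bound $\dim\trop\F_1(L)$ by $2$ and exhibit inside $\F_1(\trop L)$ a family of tropical lines of dimension at least $3$---is a legitimate route to strictness, and it is genuinely different from the paper's. The paper never counts dimensions in its proof: it writes down one explicit degenerate tropical line $\Gamma$ (closure of the fan with rays $\pos(\tf e_k+\tf e_{k+1})$) and shows it is unrealizable in $L_p$, because any classical line $\ell$ with $\trop\ell=\Gamma$ would have to meet the orbits $O_{k,k+1}$, hence pass through the pairwise intersection points $w_{k,k+1}$ of the coordinate line arrangement $\ell_i=L_p\cap\{x_i=0\}$, and for $n\ge 5$ the genericity condition (II) makes the points $w_{0,1},w_{2,3},w_{4,5}$ non-collinear. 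In exchange for the incidence argument, the paper needs the extra open conditions (I) and (II); your argument, if completed, would need only $\val(p)=(0,\ldots,0)$, i.e.\ $\trop L_p=\mathcal L$, and would in fact recover what the paper only observes computationally in Example 3.3, namely $\dim\F_1(\trop L)=3>2=\dim\trop\F_1(L)$.

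However, as written your proposal has a genuine gap, and you flag it yourself: the $3$-dimensional family is never constructed. The assertion that for $n\ge 5$ the Bergman fan of $U_{3,n+1}$ is ``rich enough'' to contain a tree with freely varying bounded edges is precisely the mathematical content of the theorem, and ``I would verify this cone by cone'' is not a verification; likewise your closing claim that for $n<5$ all tropical lines in $\trop L$ are realizable is asserted without proof (it is not needed, but it is also not free). The gap is fillable, and cheaply, by the snowflake trees of the paper's Example 3.3 and Figure 1: place a vertex at the apex of $\mathcal L$, attach three bounded edges of lengths $a,b,c\ge 0$ in the directions $\tf e_0+\tf e_1$, $\tf e_2+\tf e_3$, $\tf e_4+\tf e_5$, end each in a pair of rays with directions $\tf e_0,\tf e_1$ (resp.\ $\tf e_2,\tf e_3$; $\tf e_4,\tf e_5$), and for $n>5$ attach the remaining rays $\tf e_6,\ldots,\tf e_n$ at the apex. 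Balancing holds at every vertex, since $(\tf e_0+\tf e_1)+(\tf e_2+\tf e_3)+(\tf e_4+\tf e_5)+\tf e_6+\cdots+\tf e_n\equiv 0$ modulo $\mathbb R\tf 1$ and each cherry vertex satisfies $-(\tf e_i+\tf e_j)+\tf e_i+\tf e_j=0$; every cell lies in a cone $\pos(\tf e_i,\tf e_j)$ of $\mathcal L$ for all $a,b,c$; and distinct $(a,b,c)$ give distinct points of $\trop\Grp(1,n)$. This also explains the hypothesis $n\ge 5$: three disjoint cherries require six leaves. With such a construction written down, your dimension count does prove the theorem (indeed on the larger semialgebraic set $\{\val(p)=(0,\ldots,0)\}$); without it, nothing has been proved.
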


A semialgebraic subset of an algebraic variety $X$ is a subset of $X$ that can locally be defined by finitely many Boolean operators and inequalities of the form $\val (f) \le  \val (g)$ where $f,g$ are algebraic functions on $X$(\cite{Nic}). 
For example every set in $X$  that is Zariski open is also a semialgebraic set.

\begin{proof}[Proof of Theorem \ref{counterexample}]
Let $\mathcal L$ be the standard tropical plane in 
 $\trop \mathbb P^{n}$. 
This   is the closure  in $\trop \mathbb P^{n}$ of the tropicalization of the uniform matroid   of rank $3$ in $\{0,1,\ldots,n\}$, which  is the fan in $\trop T^{n}\cong  \mathbb R^{n+1}/\mathbb R \tf 1$ given by the  $2$-dimensional cones $\pos(\tf e_i,\tf e_j)$ for $0\le i<j\le n$ where $\tf e_0,\ldots,\tf e_{n}$ is the standard basis of $\mathbb R^{n+1}$. 
Let $\Gamma^{\circ}\subseteq \trop (T^{n})$ be the $1$-dimensional fan whose rays are  $\pos(\tf e_{i}+\tf e_{j})$ where $0\le i\neq j\le n$.
The closure of $\Gamma^{\circ}$ in $\trop \mathbb P^{n}$ is a tropical line $\Gamma$ and  since $\Gamma^{\circ}\subseteq \mathcal L\cap \trop T^{n}$ then $\Gamma$ is contained in $\mathcal L$.

Given $p\in\Grp(2,n)$ we denote by $L_p$ the associated plane in $\mathbb P^{n}$.
We show that we can find an open semi-algebraic set $\mathcal U$ in $\Grp(2,n)$ such that for every $p\in\mathcal U$ we have  $\trop L_p=\mathcal L$ and  there does not exist $\ell\subseteq  L_p$ such that $\trop \ell=\Gamma$.

Firstly we have that $\trop L_p=\mathcal L$ if and only if $p\in \mathcal U_1$ where $$\mathcal U_1=\{q\in \Grp(2,n): \val(q)=(0,\ldots,0)\}.$$

The plane $L_p$ induces a  line arrangement $\mathcal A=\{\ell_0,\ldots,\ell_{n}\}\subseteq \mathbb P^{n}$ given by the lines  $\ell_i=L_p\cap \{x_i=0\}$, with  $x_0,\ldots,x_{n}$   coordinates of $\mathbb P^{n}$. Let $i,j$ be two distinct indices then we denote by $w_{i,j}$  the point of intersection of $\ell_i$ and $\ell_j$. 
There exists a Zariski open set $\mathcal U_2$  of $\Grp(2,n)$ such that for every $p\in \mathcal V$ the line arrangement induced by $L_p$ satisfies  the following conditions
\begin{itemize}
\item[(I)] $\ell_i\cap \ell_j\cap \ell_k=\emptyset$ for any three distinct indices $i,j,k$;
\item[(II)] $w_{i_0,i_1},w_{i_2,i_3},w_{i_4,i_5}$  are not collinear unless $\{i_0,i_1\}\cap \{i_2,i_3\}\cap \{i_4,i_5\}\neq \emptyset$.
\end{itemize}

Let  $\mathcal U$ be the set  $\mathcal U_1\cap\mathcal U_2$. We  prove that if $p\in \mathcal U$ then $\Gamma$ is not realisable in $L_p$.

Suppose there exists a line $\ell\subseteq L_p$ such that  $\trop \ell=\Gamma$. Let 
$ O_{i,j}$ be the orbit of $\mathbb P^n$ where $x_i=x_j=0$,  then by  Theorem 6.3.4 in \cite{M-S} we have that $\ell\cap O_{k,k+1}\neq \emptyset$  for $k=0,\ldots,n-1$ if $n$ is odd and  for $k=0,\ldots,n-2$ if $n$ is even. In fact we have that $\trop \ell\cap \pos(\tf e_{k},\tf e_{k+1}) =\Gamma\cap \pos(\tf e_{k},\tf e_{k+1})=\pos(\tf e_{k}+\tf e_{k+1})$. Moreover  $\ell\cap O_{k,k+1}\subseteq L_p\cap O_{k,k+1}=\ell_{k}\cap \ell_{k+1}=w_{k,k+1}$ hence $\ell\cap O_{k,k+1}=w_{k,k+1}$. This implies that  $w_{0,1},\ldots,w_{n-1,n}$ (\emph{resp.} $w_{0,1},\ldots,w_{n-2,n-1}$ )  are collinear  and if $n\ge 5$ this is a contradiction since $L_p$ satisfies condition (II).

\end{proof}

\begin{remark}
Note that condition (I) is satisfied by all linear spaces $L_p$ with $p\in\mathcal U$. In fact 
$\ell_i\cap\ell_j\cap\ell_k=\emptyset$ if and only if $\trop \ell_i\cap\trop \ell_j\cap\trop \ell_k=\emptyset$.  Since  $\trop L_p=\mathcal L$ we have that $\trop\ell_i=\trop L_p\cap \{x_i=\infty\}=\mathcal L\cap \{x_i=\infty\}$ and by definition of $\mathcal L$ the intersection $\trop \ell_i\cap\trop \ell_j\cap\trop \ell_k$ is empty for every triple of distinct indices $i,j,k$.
\end{remark}

In the following examples we will always assume $\Bbbk$ to be the field of generalised Puiseux series  $\mathbb C((\mathbb R))$ with the natural valuation associated to it (see \cite[Example 2.17]{M-S}). The explicit computations for the  tropical varieties and prevarieties
are done with Tropical.m2 \cite{Trop2}, while we use \verb|Polymake| \cite{GJ00} and the \textit{Polyhedra} package in \verb|Macaulay2| \cite{M2}
 to get the tree associated the tropical lines  in a cone of $\F_1(\trop L)$.

\begin{es}\label{saras}
Let  $L$ be the plane spanned by the rows of the following matrix 
$$
\begin{pmatrix}
0& -271& -92& 0& -13& -54\\
0& -18& -7& -1& 0& -4\\
-1& 12293 & 4173 & 0 & 588& 2450	
\end{pmatrix}.
$$

The  line arrangement  $\mathcal A=\{\ell_i=L\cap \{x_i=0\} :i=0,\ldots,5\}$  satisfies conditions (I) and (II) in the proof of Theorem  \ref{counterexample}. 
The coordinates of  the point $p\in \Grp(2,5)$   associated to $L$ are non zero complex numbers hence $\val(p)=(0,\ldots,0)$. This implies that $\trop L=\mathcal L$ hence   $p\in \mathcal U$.
The Fano scheme $\F_1(L)$ is defined by the ideal 
\begin{eqnarray*}
&&(49p_{25}-37p_{35}-29p_{45},49p_{15}+40p_{35}-64p_{45},49
      p_{05}-26p_{35}-27p_{45},\\
&&98p_{24}-74p_{34}+153p_{45},98p_{14}+80p_{34}+461p_{45},\\
&&98p_{04}-52p_{34}-13p_{45},98p_{23}+58p
      _{34}+153p_{35},\\
&&98p_{13}+128p_{34}+461p_{35},98p_{03}+54p_{34}-13p_{35},\\
&&98p_{12}+144p_{34}+473p_{35}+73p_{45},98p_{02}+10p
      _{34}-91p_{35}-92p_{45},\\
&&98p_{01}-112p_{34}-234p_{35}-271p_{45})
\end{eqnarray*}

The tropicalization $\trop F_1(L)$ is $2-$dimensional fan in $\trop \mathbb  P^{9}$.

The tropical Fano scheme  $\F_1(\trop L)$ is the tropical prevariety defined by the tropical incidence relations associated to $\trop L$ (\cite[Theorem 1]{Haque}). 
These are given by the Pl\"ucker relations generating $\Grp(1,5)$ and by  all tropical polynomials of the form  $$\bigoplus_{i\in T\setminus S} p_{S\cup i}p_{T\setminus i} $$ where $S\subseteq \{0,1,2,3\}=T$, $|S|=1$ and $p_{T\setminus i}$ are the valuations of  coordinates of $p$. In this case $p_{T\setminus i}=0$ for all $0\le i\le3$.

Computations  show that while $\trop F_1(L)\cap \trop T^{9} $ is a $2$-dimensional fan, the tropical Fano scheme $\F_1(\trop L)\cap  \trop T^{9} $ is a fan with $15$ maximal cones of dimension $3$ and $30$ maximal cones of   dimension $2$. The rays of $\F_1(\trop L) \cap \trop T^{9}$ are the same as the rays of $\trop F_1(L) \cap \trop T^{9}$ and the dimension $2$ maximal cones are also cones of $\trop F_1(L) \cap \trop T^{9}$.
The dimension $3$ cones of $\F_1(\trop L)\cap  \trop T^{9}$ are the ones parametrizing tropical lines  whose \textit{combinatorial type} (see Section \ref{proofThm} for a definition) is a snow-flake tree. This is the graph in    Figure \ref{snowflakeType} whose leaves are labelled by numbers from $0$ to $5$. The $2$-dimensional faces of these cones are contained in $\trop \F_1(L)$. The relative interior is parametrising all tropical lines not realisable in $L$. In  Figure \ref{snowflake} we have an example of one of these tropical lines.
\end{es}

\begin{figure}
\definecolor{rvwvcq}{rgb}{0.30196078431372547,0.30196078431372547,2}\begin{tikzpicture}[line cap=round,line join=round,>=triangle 45,x=0.5cm,y=0.5cm]\clip(-15.64,-6.91) rectangle (12.04,7.31);\draw [line width=2pt] (-4,3)-- (-2,1);\draw [line width=2pt] (-2,1)-- (0,3);\draw [line width=2pt] (0,3)-- (0,5);\draw [line width=2pt] (0,3)-- (2,3);\draw [line width=2pt] (-4,3)-- (-4,5);\draw [line width=2pt] (-4,3)-- (-6,3);\draw [line width=2pt] (-2,1)-- (-2,-2);\draw [line width=2pt] (-2,-2)-- (-4,-4);\draw [line width=2pt] (-2,-2)-- (0,-4);\begin{scriptsize}
\draw [fill=rvwvcq] (0,5) circle (2.5pt);\draw[color=rvwvcq] (0.16,5.44) node {};

\draw [fill=rvwvcq] (2,3) circle (2.5pt);\draw[color=rvwvcq] (2.16,3.44) node {};\draw [fill=rvwvcq] (-4,5) circle (2.5pt);\draw[color=rvwvcq] (-3.84,5.44) node {};\draw [fill=rvwvcq] (-6,3) circle (2.5pt);\draw[color=rvwvcq] (-6,3.44) node {};
\draw [fill=rvwvcq] (-4,-4) circle (2.5pt);\draw[color=rvwvcq] (-4.2,-3.58) node {};\draw [fill=rvwvcq] (0,-4) circle (2.5pt);\draw[color=rvwvcq] (0.16,-3.58) node {};\end{scriptsize}
\end{tikzpicture}
 \caption{A snow-flake tree in $\Grp(1,5)$. }
   \label{snowflakeType} 
\end{figure}

In the next example  we show that it is possible to realise the line $\Gamma$ in the proof of Theorem \ref{counterexample} by choosing  a particular $L'$ with $\trop L'=\mathcal L$. 

\begin{es}\label{L'}
Let $L'\subseteq \mathbb P^5$ be the plane spanned by the rows of the following matrix:
$$
\begin{pmatrix}
1& 3& 0& 1& 5& 7\\
0& 0 & 1 & 3& -1&-1\\
1 & 4 &-1 &-3 & 0 & 0
\end{pmatrix}
$$

The line arrangement $\mathcal A'$ associated to $L'$ satisfies condition (I) of the proof of Theorem \ref{counterexample} and  we have  $\trop L'=\mathcal L=\trop L$. However $\mathcal A'$ does not satisfy condition (II). 
Let $p'_{i,j}$ be the point $L'\cap O_{i,j}$.  The points $p'_{0,1},p'_{2,3}$ and $p'_{4,5}$ are collinear and  the line $\ell$  passing through them is defined by  the following equations
\begin{eqnarray*}
x_4-x_5=0, 3x_2-x_3=0,3x_1+4x_3+12x_5=0,3x_0+x_3+3x_5=0.
\end{eqnarray*}

The tropical line $\trop \ell $ is the closure in $\trop \mathbb P^5$ of the fan in $\trop T^5$ whose  rays are  $\pos(\tf e_0+\tf e_1),\pos(\tf e_2+\tf e_3),\pos(\tf e_4+\tf e_5)$. Hence this is the tropical line $\Gamma$ of the proof of Theorem \ref{counterexample}. We now compare $\trop \F_1(L')$ with  $\trop \F_1(L)$.
The ideal associated to the Fano scheme $F_1(L')$ is
\begin{eqnarray*}
&&(6p_{25}-2p_{35}-p_{45},6p_{15}+8p_{35}+97p_{45},6p_{05}+2p_{35}+25p_{45}\\
&&6p_{24}-2p_{34}-p_{45},6p_{14}+8p_{34}+73p_{45},6p_{04}+2p_{34}+19p_{45}\\
&&6p_{23}+p_{34}-p_{35},6p_{1,3 }-97p_{34}+73p_{35},\\
&&6p_{03}-25p_{34}+19p_{35},6p_{12}-31p_{34}+23p_{35}-4p_{45},\\
&&6p_{02}-8p_{34}+6p_{35}-p_{45},6p_{01}+p_{34}-p_{35}-3p_{45})
\end{eqnarray*}
and $\trop \F_1(L')$ is a $2-$dimensional fan in $\trop \mathbb P^5$. 
Let $L$ be the plane of Example \ref{saras}. Since $\trop L=\trop L'$ then $F_1(\trop L)=\F_1(\trop L')$ and both  $\trop \F_1(L')$ and  $\trop \F_1(L)$ are contained in $\F_1(\trop L)$. All rays of   $\trop \F_1(L)$ are also rays of $\trop \F_1(L')$ but $\trop \F_1(L')$ has also an extra ray $r$ that is not contained in  $\trop \F_1(L)$.   
The combinatorial type of  the tropical lines associated to points in $r$  is the snowflake in  Figure \ref{snowflakeType}. 
Moreover $r$ is the barycentre of the  $3$-dimensional cone $C$ of $\F_1(\trop L)$ containing $r$ in its relative interior. If $C=\pos(r_1,r_2,r_3)$ then $r=\pos(r_1+r_2+r_3)$. We have that $C\cap \trop F_1(L)$ is given by the two dimensional faces of $C$. On the other hand $C\cap \trop F_1(L') $ is the union of the three cones $\pos(r_1,r_1+r_2+r_3),\pos(r_2,r_1+r_2+r_3),\pos(r_3,r_1+r_2+r_3)$ (see Figure \ref{triang}).

\begin{figure}
\begin{center}
    \begin{tikzpicture}[scale=1]
\begin{scope}
\draw(-1,0)--(7.3,0);
\draw (3,0)--(0.5,4.8);

\draw(3,0)--(5.3,4.8);

\draw (3,0)--(7.3,-4);
\draw (-1,-4)--(3,0);

\node [right] at (5,4){$\tf e_1$};
\node [right] at (0.2,4){$\tf e_0$};
\node [right] at (6.7,0.2){$\tf e_2$};
\node [right] at (6.8,-3.5){$\tf e_3$};
\node [right] at (-1.2,-3.5){$\tf e_4$};
\node [right] at (-0.9,0.2){$\tf e_5$};

\draw[-][ultra thick ,red]  (3,0)--(5.5,-1);
\draw[-][ultra thick ,red]  (5.5,-1)--(6.8,-1);
\draw[-][ultra thick ,red]  (5.5,-1)--(6.8,-2);

\draw [-][ultra thick ,red] (3,0)--(3,2.5);
\draw[-][ultra thick ,red]  (3,2.5)--(4,4.6);
\draw[-][ultra thick ,red]  (3,2.5)--(2,4.6);

\draw [-][ultra thick ,red]  (3,0)--(0.5,-1);
\draw[-][ultra thick ,red]  (0.5,-1)--(-0.8,-1);
\draw[-][ultra thick ,red]  (0.5,-1)--(-0.8,-2);



\end{scope}
\end{tikzpicture}
\end{center}
    \caption{A tropical line  contained in the three cones $\pos(\tf e_0,\tf e_3),\pos(\tf e_2,\tf e_5),\pos(\tf e_1,\tf e_4)$ of $\trop L\subseteq \mathbb R^5\cong \mathbb R^6/\mathbb R \tf 1 $ as in Example \ref{saras}.}
   \label{snowflake} 
\end{figure}

\end{es}

\begin{figure}
\begin{center}
\definecolor{ccqqqq}{rgb}{0.8,0,0}\definecolor{uuuuuu}{rgb}{0.26666666666666666,0.26666666666666666,0.26666666666666666}\definecolor{ududff}{rgb}{0.30196078431372547,0.30196078431372547,1}\begin{tikzpicture}[line cap=round,line join=round,>=triangle 45,x=0.6cm,y=0.6cm]\clip(-13,0) rectangle (4,6);\draw [line width=2pt] (-5.3,5.08)-- (-8.12,0.18);\draw [line width=2pt] (-8.12,0.18)-- (-2.4664755214562497,0.18780836132788226);\draw [line width=2pt] (-2.4664755214562497,0.18780836132788226)-- (-5.3,5.08);
\draw [line width=1pt,dashed] (-5.28,2.08)-- (-8.12,0.18);
\draw [line width=1pt,dashed] (-5.28,2.08)-- (-5.3,5.08);
\draw [line width=1pt,dashed] (-5.28,2.08)-- (-2.4664755214562497,0.18780836132788226);\draw (-5.14,5.51) node {$r_1$};\draw (-8.7,0.35) node {$r_2$};\draw (-1.8,0.35) node {$r_3$};
\draw (0,4.5) node {$C\cap \F_1(\trop L) $};\draw [line width=2pt](2.5,4.5)--(3.7,4.5);
\draw (0,2.7) node {$C\cap\F_1(\trop L')$};
\draw [line width=1pt,dashed](2.5,2.7)--(3.7,2.7);
\end{tikzpicture}
\end{center}
\caption{ A section of the cone $C\subseteq \F_1(\trop L)$ as in Example \ref{L'}. }
\label{triang}
\end{figure}

In Example \ref{nonfan} we exhibit a plane $L''$ such that $\trop (L''\cap T^n)$ is a fan but $\trop (\F_1(L'')\cap T^{\binom{n+1}{2}-1})$ is not. This shows that Proposition \ref{fanProperty} does not hold if we replace $\F_1(\trop X)$ with $\trop\F_1(X)$.

\begin{es}\label{nonfan}
 Let $L''$ be the plane in $ \mathbb P^5$  spanned by the rows of the following matrix 
$$
M=\begin{pmatrix}
1&1&0&t&1&1\\
1&t+1&1&2&t&0\\
5&8&6&9&7&10
\end{pmatrix}.
$$

We have that $\trop L''=\trop L$ with $L$ the plane in Example \ref{saras} and 
the line arrangement $\mathcal A''=\{ L''\cap O_{i,j}:0\le i<j\le 5\}$ satisfies condition (I) of proof of Theorem \ref{counterexample}. Moreover the points $p''_{01}=L''\cap O_{0,1},p''_{23}=L''\cap O_{2,3}$ and $p''_{45}=L''\cap O_{4,5}$ are not collinear.

The line spanned by the first two rows of $M$ tropicalizes to a tropical line  whose combinatorial type is a snowflake tree  whose pairs of leaves  are labelled by $i$ and $i+1$ for $i=0,..,4$. The corresponding point in $\trop F_1( L'')$ is $\tf e_{01}+\tf e_{23}+\tf e_{45}$ in $\mathcal O=\trop (\Grp(1,5)\cap T^{\binom{6}{2}})\subseteq \mathbb R^{\binom{6}{2}}/\mathbb R \tf 1$, where the $\tf e_{ij}$'s denote the  standard basis vectors of  $\mathbb R^{\binom{6}{2}}$.

We want to show that $\trop\F_1(L'')$ is not a fan by proving that  the ray $\pos(\tf e_{01}+\tf e_{23}+\tf e_{45})$ is not  contained in $\trop (\F_1( L'')\cap T^{\binom{6}{2}})$. 

By contradiction suppose $\pos(\tf e_{01}+\tf e_{23}+\tf e_{45})\subseteq \trop (\F_1( L'')\cap T^{\binom{6}{2}})$ then  its closure  in $\trop \mathbb P^{\binom{6}{2}}$ is a point $Q$ and it  is contained in $\trop \F_1(L'')$. 

The  point  $Q$ is  in the orbit  $\mathcal O=\{[p_{ij}]\in \trop  \mathbb P^{\binom{6}{2}-1}: p_{01}=p_{23}=p_{45}=\infty \}$ and $Q_{ij}=0$ for $ij\neq 01,23,45$.  The tropical line  $\Gamma_Q$ is  given by the fan in $\trop\mathbb P^5$ with rays $\pos(\tf e_0+\tf e_1),\pos(\tf e_2+\tf e_3)$ and $\pos(\tf e_4+\tf e_5)$. Moreover  $\Gamma_Q$  is not realizable in $L''$ otherwise the points   $p''_{01},p''_{23}$ and $p''_{45}$ would be  collinear. 
\end{es}

Another instance where the containment $\trop \F_1(X)\subseteq \F_1(\trop X)$ is strict is the case of \textit{general} hypersurfaces whose tropicalization has the same support of a tropical linear space. An hypersurface is \textit{general}  if its Fano scheme of lines has dimension $2n-d-3$ (see  \cite[Theorem 8]{barth}).

\begin{prop}\label{LinearObstruction}
If $X$ is a general hypersurface of degree $d>1$  and the tropicalization $\trop X$ has the same support as a  tropical linear space then  $\trop(\F_1(X))\subsetneq \F_1(\trop X).$
\end{prop}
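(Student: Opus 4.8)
The plan is to establish the strict containment by a dimension count. Because $\trop \F_1(X)\subseteq \F_1(\trop X)$ already holds by (\ref{cont}), it suffices to exhibit inside $\F_1(\trop X)$ a family of larger dimension than $\trop \F_1(X)$. The key starting observation is that $\F_1(\trop X)$ depends only on the support of $\trop X$: in the definition, the condition $\Gamma_p\subseteq \trop X$ is a containment of supports. By hypothesis this support is that of a tropical linear space; since $\trop X$ is pure of dimension $n-1$, that linear space is a tropical hyperplane $\mathcal H$. As every tropical hyperplane is realizable as $\mathcal H=\trop H$ for a classical hyperplane $H\cong \mathbb P^{n-1}\subseteq \mathbb P^n$, I would replace $\trop X$ by $\trop H$ and use $\F_1(\trop X)=\F_1(\mathcal H)=\F_1(\trop H)$.

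I would then bound $\dim \F_1(\trop X)$ from below. Applying (\ref{cont}) to the hyperplane $H$ gives $\trop \F_1(H)\subseteq \F_1(\trop H)=\F_1(\trop X)$. The classical Fano scheme $\F_1(H)$ parametrises the lines of $H\cong \mathbb P^{n-1}$, so it is the Grassmannian $\Grp(1,n-1)$ of dimension $2(n-2)=2n-4$; since tropicalization preserves dimension, $\dim \trop \F_1(H)=2n-4$. By Theorem \ref{Fanstructure}, $\F_1(\trop X)$ is the support of a polyhedral complex and hence has a well-defined dimension, and the inclusion above yields $\dim \F_1(\trop X)\ge 2n-4$.

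Finally I would compute $\dim \trop \F_1(X)$. As $X$ is a general hypersurface, its Fano scheme of lines has dimension $2n-d-3$ by definition, and preservation of dimension under tropicalization gives $\dim \trop \F_1(X)=2n-d-3$. One checks that $2n-4>2n-d-3$ precisely when $d>1$, which is the standing hypothesis; therefore $\dim \trop \F_1(X)<\dim \F_1(\trop X)$ and, together with (\ref{cont}), the containment is strict.

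The heart of the argument is the lower bound on $\dim \F_1(\trop X)$: the tropical hyperplane $\mathcal H=\trop H$ already contains the full $(2n-4)$-dimensional family of tropicalized lines coming from the classical lines of $H\cong \mathbb P^{n-1}$, and this family is strictly larger than $\trop \F_1(X)$ as soon as $d>1$. Rather than a single hard step, the proof rests on three bookkeeping facts that must be verified with care: that $\F_1(\trop X)$ depends only on the support of $\trop X$ (allowing the reduction to a hyperplane), that it carries a well-defined dimension through Theorem \ref{Fanstructure}, and that tropicalization preserves the dimensions of $\F_1(H)$ and of $\F_1(X)$.
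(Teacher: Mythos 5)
Your proposal is correct and follows essentially the same route as the paper: reduce to a classical hyperplane $H$ whose tropicalization has the given support, use the containment $\trop \F_1(H)\subseteq \F_1(\trop H)=\F_1(\trop X)$ to get the lower bound $\dim \F_1(\trop X)\ge 2n-4$, and compare with $\dim \trop\F_1(X)=\dim\F_1(X)=2n-d-3$ to conclude strictness for $d>1$. Your version is in fact slightly cleaner in identifying the relevant Grassmannian as $\Grp(1,n-1)$ (the paper writes $\Grp(2,n)$, an apparent typo, though the value $2n-4$ is the same) and in spelling out the realizability of the tropical hyperplane.
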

\begin{proof}
If  $L$ is a $(n-1)$-dimensional linear space then  the dimension of $\F_1(L)$  is $\dim \Grp(2,n)=2n-4$. By hypothesis we have that $\F_1(\trop X)=\F_1(\trop L)$ and $\dim  \F_1(\trop L)\ge  \dim \trop F_1(L)=2n-4$. 
On the other hand the dimension of $\trop F_1(X)$ is equal to the dimension of $\F_1(X)$  which is $2n-d-3$. Suppose $\trop F_1(X)=\F_1(\trop X)$ then 
we would have   $2n-d-3 \ge  2n-4$ but this is not the case if $d>1$.
\end{proof}

\section{Toric varieties }\label{toricv}
In this section we look at Fano schemes of  toric varieties. We  prove that  for these varieties  the tropical Fano scheme is equal to the tropicalization of the classical Fano scheme.\\

Consider a toric variety $X$  associated to a set of lattice points $\mathcal A=\{\tf a_0,\ldots,\tf a_n\}$ with $\mathcal A \subseteq \ZZ^m \times \{1\}  $ and denote by  $A$ the matrix whose columns are the points in $\mathcal A$.
The variety $X$ has a natural   embedding  in $\mathbb P^n$  given by a monomial map
 $\phi_{\mathcal A}:(\Bbbk^*)^m\times \Bbbk^*\to \mathbb P^n$ (see \cite[Section 2.1]{Cox2}). 
   We denote  the closure of the image of this map by $X_{\mathcal A}$.  The matrix $A$  also defines a map $\trop (\phi_{\mathcal A}):\mathbb {R}^{m+1}\to \mathbb {R}^{n+1}$. By  \cite[Theorem 3.2.13]{M-S} we have that  $\trop(X_{\mathcal A}\cap T^n)\subseteq \mathbb R^{n+1}/\mathbb R \tf 1$ is the quotient by $\mathbb R \tf 1 $ of the  image of $\trop (\phi_{\mathcal A})$ which  is  the  classical linear space spanned by the rows of $A$. Since the embedding of the toric variety only depends on the row span of $A$ (\cite[Proposition 1.1.9]{Cox2}) it is possible to  recover the ideal defining $X_{\mathcal A}$   from  $\trop (X_{\mathcal A}\cap T^n)$.

\begin{es}\label{firstEs}
 Let $X_{\mathcal A}\subseteq \mathbb P^3$ be the toric variety associated to the set of lattice points $\mathcal A=\{(1,1,1),(0,0,1),(0,-1,1),(1,0,1)\}$. The matrix $A$ is $$\begin{pmatrix}
1&0&0&1\\
1&0&-1&0\\
 1&1&1&1
 \end{pmatrix} 
 $$ and the ideal defining $X_{\mathcal A}$ is $(xz-yw)$.  The tropicalization  $\trop (X_{\mathcal A} \cap T^3)$ is the quotient by $\mathbb R \tf 1$ of  $\{(x,y,z,w):x+z=y+w\}$ and this   is equal to the quotient by $\mathbb R \tf 1$ of the linear span of the rows of A.
\end{es}

By contrast with the case of linear spaces we show that for  toric varieties   the tropical Fano scheme is the same as the tropicalization of the classical Fano scheme. 

\begin{thm}\label{equality}
Let $X=X_{\mathcal A}$ be a toric variety. Then $\F_d(\trop X)=\trop \F_d(X)$.
\end{thm}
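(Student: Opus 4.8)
The plan is to prove the two inclusions separately. The inclusion $\trop \F_d(X)\subseteq\F_d(\trop X)$ is the general containment (\ref{cont}), valid for every projective variety, so all the work lies in the reverse inclusion $\F_d(\trop X)\subseteq\trop \F_d(X)$. Fix $p\in\F_d(\trop X)$, so that the tropical $d$-plane $\Gamma_p$ is contained in $\trop X$. Since $p$ lies in $\trop\Grp(d,n)$, it is the tropicalization of \emph{some} classical $d$-plane $\ell_0\subseteq\mathbb P^n$; the content of the theorem is to upgrade this to a $d$-plane actually contained in $X$, that is, to produce $q\in\F_d(X)$ with $\trop q=p$.

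The structural input that makes this possible, and that fails for the generic planes of Theorem \ref{counterexample}, is that for a toric variety $\trop X$ is classically linear on the torus: by \cite[Corollary 3.2.13]{M-S}, $\trop X\cap T^n$ is the honest linear subspace $V:=(\operatorname{rowspan}A)/\mathbb R\tf 1$ of $\mathbb R^{n+1}/\mathbb R\tf 1$, not a proper Bergman fan. The point I would exploit is that along $V$ the toric ideal is invariant: if $w\in V$, then for every binomial generator $x^u-x^v$ of $I_A$ one has $w\cdot u=w\cdot v$, so $\init_w I_A=I_A$ and hence $\init_w(X\cap T^n)=X\cap T^n$. Consequently, for $w$ in the relative interior of a maximal cell of $\Gamma_p\subseteq V$, the classical linear space $\init_w\ell_0$ realizing the star of $\Gamma_p$ at $w$ already lies on $X$. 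Local realizability of $\Gamma_p$ inside $X$ is therefore automatic, and is organized by the polyhedral (indeed fan) structure of $\F_d(\trop X)$ furnished by Theorem \ref{Fanstructure} and Corollary \ref{fanProperty}.

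It remains to globalize. I would promote the compatible family of cell-wise linear spaces $\init_w\ell_0\subseteq X$ to a single classical $d$-plane $\ell\subseteq X$ with $\trop\ell=\Gamma_p$: starting from the realization $\ell_0$ of $p$, whose every initial degeneration satisfies $\init_w\ell_0\subseteq\init_w X=X$, I would produce a deformation $\ell$ of $\ell_0$ lying on $X$ and tropicalizing to the same $\Gamma_p$, lifting the Pl\"ucker coordinates by the Fundamental Theorem of tropical geometry. Finally I would treat the toric boundary orbit by orbit: by Theorem \ref{Fanstructure} it suffices to analyse each $\F_d(\trop X)\cap\mathcal O$, and the relevant boundary stratum of $X$ is again toric, so the statement follows by induction on $\dim X$. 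The step I expect to be the main obstacle is exactly this passage from the cell-wise (equivalently, order-by-order) containment in $X$ to a genuine classical $\ell\subseteq X$ with the prescribed tropicalization; the linearity of $V$, which removes the matroid-theoretic obstruction responsible for the strict containment in Theorem \ref{counterexample}, is what I expect to make this lift possible.
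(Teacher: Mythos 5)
There is a genuine gap, and it sits exactly where you flag it, but it is worse than an ``obstacle'': both halves of your strategy for the hard inclusion are unestablished. First, the claimed automatic local realizability. Your computation that $\init_w I_{\mathcal A}=I_{\mathcal A}$ for $w\in V=\operatorname{rowspan}(A)/\mathbb R\tf 1$ is correct, so $\init_w X=X$ on the torus; but from $\trop\ell_0=\Gamma_p\subseteq\trop X$ you can only conclude $\trop(\init_w\ell_0)=\St_w\Gamma_p\subseteq\St_w(\trop X)=\trop(\init_w X)=\trop X$, i.e.\ a containment of \emph{tropicalizations}. Containment of tropicalizations never by itself implies containment of varieties --- that implication is precisely the realizability problem the theorem is about, and your argument reproduces it unchanged one degeneration down. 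The lift $\ell_0$ is an arbitrary Pl\"ucker lift of $p$ and has no relation to $X$: for the quadric $X=V(xz-yw)$ of Example \ref{firstEs}, the lines actually lying on $X$ form two one-parameter rulings, whereas the fibre of the tropicalization map over $p$ in $\Grp(1,3)$ is a full-dimensional semialgebraic set, so a generic lift $\ell_0$ and all its initial degenerations $\init_w\ell_0$ miss $X$ entirely. Second, the globalization step is circular: the Fundamental Theorem lifts points of $\trop Y$ for a \emph{variety} $Y$, so to lift $p$ into $\F_d(X)$ you would need to know $p\in\trop\F_d(X)$, which is the statement being proved. Theorem \ref{Fanstructure} only exhibits $\F_d(\trop X)$ as a polyhedral complex (a tropical prevariety); it is not presented as the tropicalization of any variety, so there is nothing to apply the theorem to, and no deformation mechanism is supplied.

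For comparison, the paper's proof never deforms a lift and never argues cell by cell. Given $\Gamma_p\subseteq\trop X$, it reads off the non-empty minimal flats of the matroid $M_{\Gamma_p}$; since the corresponding vectors $\tf e_{F_1^0},\ldots,\tf e_{F_1^m}$ lie in the linear space $\trop(X\cap T^n)$ and are independent, they can be taken as rows of $A$, which partitions $\mathcal A$ and yields an $m$-Cayley structure $\pi$ (Proposition \ref{Cay}). By the results of Ilten--Zotine \cite{I-Z}, $\pi$ produces an \emph{explicit} linear subspace $L\subseteq X$ cut out by binomials $x_i-x_j$, so that $\trop L=L$ and a torus translate satisfies $\trop(t\cdot L)=\mathcal L+p\supseteq\Gamma_p$. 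A coordinate projection identifies $L$ with $\mathbb P^l$, where every tropicalized linear space is realizable; pulling the realization back along this monomial isomorphism and translating by $t$ gives $\ell\subseteq X$ with $\trop\ell=\Gamma_p$. The boundary is handled as you propose, by replacing $X$ with the again-toric $\overline{X\cap O}$. So your structural diagnosis --- linearity of $\trop X$ on the torus is what makes toric varieties different from the planes of Theorem \ref{counterexample} --- is on target, but the proof needs the Cayley-structure construction (or some substitute producing an honest linear subvariety of $X$ containing a translate of the recession fan of $\Gamma_p$), not a degeneration-and-lift argument.
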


We prove  this result by showing that  for each tropicalized linear space $\Gamma\subseteq \trop X$ there exists a linear space $\ell\subseteq X$ that tropicalizes to it. 
We explicitly  construct  $\ell$ using  \textit{Cayley structures}  on $\mathcal A$. We use results  in  \cite[Section 3]{I-Z} where the authors prove that for  each $s-$Cayley structure $\pi$
there exists a  subvariety $Z_{\pi}$ of  $ \F_{s}(X_{\mathcal A})$ and from  $\pi$ it is also possible to   deduce equations of the linear spaces parametrised by $Z_{\pi}$. \\

Given a set of $n+1$ lattice points $\mathcal A$ in  $\mathbb Z^m\times \{1\}$, let $L$ be the kernel of the map defined by the matrix $A$ and $\tf e_i$ be the standard basis vectors of $\mathbb R^{n+1}$. If $\tf l\in L$ we can write $\tf l=\sum_{l_i>0} l_i\tf e_i-\sum_{l_i<0} -l_i\tf e_i$ and denote  by $l^+=\sum_{l_i>0} l_i\tf e_i$ and $l^-=\sum_{-l_i<0}- l_i\tf e_i$. We have that $\tf l\in L$ if and only if $ \sum_i l_i \tf a_i=0$.
 The toric variety $X_{\mathcal A}\subseteq \mathbb P^n$ is generated by binomials of the form $\tf x^{l^+}-\tf x^{l^-}=\prod_{l_i>0} x_i^{l_i} -\prod_{l_i<0} x_i^{l_i} $  with $\tf l\in L$ (\cite[Proposition 1.1.9]{M-S}). 
 
A face $\tau $ of $\mathcal A$ is the intersection of a face of $\conv (\mathcal A)$ with $\mathcal A$. Denote by $\Delta_s$ the standard basis $\{\tf e_0,\ldots,\tf e_s\}$ of $\mathbb Z^{s+1}$. 
\begin{deff}
An $s$-Cayley structure on $\tau$  is  a surjective map $\pi : \tau \to  \Delta_s$
such that if
$\tf l\in L,l_i\neq 0$ for all $i$ with  $\tf a_i\in\tau$   and $\sum_{l_i\neq 0} l_i\tf a_i=0$ then 
$\sum_{ l_i\neq 0}  l_i\pi(\tf a_i)=0$, or equivalently  
$\sum_{l_i>0}l_i \pi(\tf a_i)=\sum_{l_i<0}-l_i \pi(\tf a_i)$.
\end{deff}

\begin{es}
Consider the set of lattice points $\mathcal A$ as in Example \ref{firstEs}.
A $1-$Cayley structure  is given by $\pi:\mathcal A\to \mathbb Z^2$ with $\pi((0,0,1))=\pi((0,-1,1))=\tf e_0$ and $\pi((1,0,1))=\pi((1,1,1))=\tf e_1.$
An example of a surjective map $\pi:\mathcal A\to \Delta_1$ that is not a Cayley structure  is given by $\pi:\mathcal A\to \mathbb Z^2$ with $\pi((1,1,1))=\pi((0,-1,1))=\tf e_0$ and $\pi((0,0,1))=\pi((1,0,1))=\tf e_1$. We can see that $\tf l=(1,-1,1,-1)$ is in $L$  hence $(1,1,1)-(0,0,1)+(0,-1,1)-(1,0,1)=0$ but if we apply $\pi$ we get $2\tf e_1-2\tf e_2=0$ which is a contradiction. 
\end{es}

We now prove that given a tropicalized  linear space  in $\trop (X\cap T^n)$ we can associate a Cayley structure on $\mathcal A$ to it.

Let $\Gamma$ be a $d$-dimensional  tropicalized linear space in $\trop T^n$ and let $M_{\Gamma}$ be the matroid associated to it. This is the matroid on $\{0,1,\ldots n\}$ whose bases are the set $\{i_0,\ldots,i_d\}$ such that the corresponding Pl\"ucker coordinates $p_{i_0,\ldots,i_d}$ is not zero. Note that this matrix does not have loops, circuits of one element.\\
The recession fan of $\Gamma$ is the  fan  whose cones are $\pos(\textbf {e}_{F_1},\ldots, \textbf {e}_{F_{d+1}})+\mathbb R  \textbf {1}$ where $\emptyset\neq F_1\subsetneq  \ldots\subsetneq  F_{d+1}$ is a
 maximal chain of flats of $M_{\Gamma}$, $\textbf e_{F_i}=\sum_{j\in F_i}\textbf {e}_i$ 
 and $(\textbf{e}_i)_k=1 $ for $k=i$ and $(\textbf{e}_i)_k=0$ otherwise.

\begin{prop}\label{Cay}
Let $X_{\mathcal A}\subseteq \mathbb P^n$ be a toric variety and let $\Gamma$ be a tropicalized linear space contained in $\trop (X_{\mathcal A}\cap T^n)$.
If  $M_{\Gamma}$ has $m+1$ non-empty  minimal flats   then there exists an  $m-$Cayley structure on $\mathcal A$. 
\end{prop}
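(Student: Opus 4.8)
The plan is to read off the Cayley structure directly from the partition of the ground set $\{0,\dots,n\}$ furnished by the minimal flats of $M_\Gamma$, and then to verify the defining relation by exploiting the geometric containment $\Gamma\subseteq\trop(X_{\mathcal A}\cap T^n)$. First I would set $V=\operatorname{rowspan}(A)\subseteq\mathbb R^{n+1}$, so that $\trop(X_{\mathcal A}\cap T^n)=V/\mathbb R\mathbf 1$ by the description recalled in this section. Since $L=\ker A$, we have $L=V^{\perp}$ and hence $V=L^{\perp}$, and moreover $\mathbf 1\in V$ because the last row of $A$ is the all-ones vector. As $M_\Gamma$ is loopless, its minimal non-empty flats are exactly its rank-$1$ flats, and these partition $\{0,\dots,n\}$ into $m+1$ classes $F_0,\dots,F_m$ by hypothesis. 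I would then define $\pi\colon\mathcal A\to\Delta_m$ by $\pi(\mathbf a_i)=\mathbf e_j$ whenever $i\in F_j$; this is surjective precisely because each $F_j$ is non-empty and there are $m+1$ of them.

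The key step is the verification of the Cayley relation. Grouping the terms of $\sum_i l_i\pi(\mathbf a_i)$ according to the classes $F_j$, one sees that the relation $\sum_i l_i\pi(\mathbf a_i)=0$ is equivalent to the family of scalar identities $\sum_{i\in F_j}l_i=0$, one for each $j$. To establish these I would use the recession fan of $\Gamma$: for every non-empty flat $F$ of $M_\Gamma$ the vector $\mathbf e_F=\sum_{i\in F}\mathbf e_i$ spans a ray of this fan, so in particular $\mathbf e_{F_j}$ does for each rank-$1$ flat $F_j$. Since $\Gamma$ is contained in the linear space $V/\mathbb R\mathbf 1$, its recession fan is contained in the recession fan of $V/\mathbb R\mathbf 1$, which is $V/\mathbb R\mathbf 1$ itself; hence $\mathbf e_{F_j}\in V/\mathbb R\mathbf 1$, and because $\mathbf 1\in V$ this lifts to $\mathbf e_{F_j}\in V=L^{\perp}$. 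Therefore $\sum_{i\in F_j}l_i=\langle\mathbf e_{F_j},\mathbf l\rangle=0$ for every $\mathbf l\in L$ and every $j$, which is exactly the required identity, and indeed a stronger one holding for all $\mathbf l\in L$ rather than only the full-support ones appearing in the definition with $\tau=\mathcal A$.

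The main obstacle is precisely this last passage: recognizing that the combinatorial Cayley condition $\sum_{i\in F_j}l_i=0$ is the orthogonality $\mathbf e_{F_j}\perp L$, and that this orthogonality is forced by the containment $\Gamma\subseteq\trop(X_{\mathcal A}\cap T^n)$ through the recession cone. The use of the recession fan is what makes the argument work uniformly, even when $\Gamma$ fails to be a fan, since the rays $\mathbf e_F$ live at infinity and are captured by $\operatorname{rec}(\Gamma)\subseteq V/\mathbb R\mathbf 1$. Once this dictionary between flats, the rays $\mathbf e_F$, and the relation lattice $L=V^{\perp}$ is in place, the surjectivity of $\pi$ and the remaining bookkeeping are routine.
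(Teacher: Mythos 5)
Your proof is correct and takes essentially the same approach as the paper: both define $\pi$ from the partition of $\{0,\ldots,n\}$ into the $m+1$ minimal flats (the paper records this partition property as a separate lemma), and both obtain the Cayley relations from the fact that the recession rays $\mathbf e_{F_j}$ of $\Gamma$ are forced to lie in the linear space $\trop(X_{\mathcal A}\cap T^n)$. Your final step, lifting $\mathbf e_{F_j}$ to the row span $V=L^{\perp}$ and concluding $\sum_{i\in F_j}l_i=\langle\mathbf e_{F_j},\mathbf l\rangle=0$ for all $\mathbf l\in L$, is a cleaner rendering of the paper's verification, which instead performs row operations on $A$ so that the $\mathbf e_{F_j}$ become actual rows of the matrix and then reads off the same identities coordinate by coordinate.
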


The following is a technical lemma which will be used for the proof of Proposition \ref{Cay}.

\begin{lemma}\label{lines}
Let $\Gamma \subseteq \trop T^n$ be  a tropicalized  linear space  and $\{F^0_1,\ldots, F^m_1\}$ the set of non-empty minimal flats of
$M_{\Gamma}$. Then 
\begin{itemize}
\item [(i)]  there exists a unique $F_1^j$ such that $i\in F_1^j$;
\item [(ii)]   $\bigcup _{j=1}^m F^j_1=\{0,\ldots,n\}$.
\end{itemize}

\end{lemma}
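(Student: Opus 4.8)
The claim is about flats of a matroid $M_\Gamma$ on the ground set $\{0,\ldots,n\}$, where $\Gamma$ is a tropicalized linear space. Both parts are really statements about the poset of flats of a loopless matroid, together with the simple observation (recorded just before the proposition) that $M_\Gamma$ has no loops. The plan is to reduce everything to the lattice-theoretic structure of flats and then apply this loopless hypothesis.

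For part (i), I would argue that the non-empty minimal flats of a matroid are precisely the \emph{rank-$1$ flats} (the closures of the single non-loop elements), and that these partition the set of non-loop elements. Concretely, for each element $i$, since $M_\Gamma$ has no loops the closure $\overline{\{i\}}$ is a flat of rank exactly $1$; it is non-empty and it is minimal among non-empty flats because any flat strictly contained in it would have rank $0$, i.e.\ be the set of loops, which here is empty. So $\overline{\{i\}}$ is one of the $F_1^j$ and it contains $i$. For uniqueness, I would use that distinct rank-$1$ flats are disjoint: the flats of a matroid form a lattice under inclusion with meet given by intersection, so if $i$ lay in two distinct minimal non-empty flats $F_1^j$ and $F_1^{j'}$, their intersection would be a smaller non-empty flat, contradicting minimality (or forcing $F_1^j=F_1^{j'}$). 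This gives both existence and uniqueness, establishing (i).

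Part (ii) is then an immediate corollary of (i): every $i\in\{0,\ldots,n\}$ lies in some $F_1^j$ by the existence half of (i), hence $\bigcup_{j=1}^m F_1^j \supseteq \{0,\ldots,n\}$, and the reverse inclusion is trivial since each flat is a subset of the ground set. So $\bigcup_{j=1}^m F_1^j = \{0,\ldots,n\}$, and in fact (i) shows this union is disjoint, i.e.\ the minimal non-empty flats form a partition of the ground set into the rank-$1$ flats (the parallel classes).

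The only genuine point requiring care—the \textbf{main obstacle}—is confirming that ``minimal non-empty flat'' coincides with ``rank-$1$ flat'' in this setting, which is exactly where the loopless hypothesis enters. In a matroid with loops, the minimal flat is the set of loops (a rank-$0$ flat), and the rank-$1$ flats then partition only the non-loops; the proposition's use of $\{0,\ldots,n\}$ as the full union relies on the absence of loops noted for $M_\Gamma$. I would therefore open the proof by explicitly invoking looplessness, verify that the empty set is the rank-$0$ flat and that the $F_1^j$ are its covers in the lattice of flats, and then the partition statement follows from standard facts about the lattice of flats (meet = intersection, covers of $\hat 0$ are pairwise meeting in $\hat 0=\emptyset$). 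Everything else is routine lattice bookkeeping.
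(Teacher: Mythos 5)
Your proof is correct, and it follows the same elementary matroid-theoretic route as the paper: both arguments rest on the looplessness of $M_{\Gamma}$ together with basic facts about flats. The differences are in the details, and they work in your favour. For uniqueness, the paper argues that if $i\in F_1^j\cap F_1^k$ then, since there are no loops, ``$\{i\}$ would also be a flat,'' contradicting minimality; read literally this claim fails when $M_{\Gamma}$ has parallel elements (then $\{i\}$ is not a flat even though $i$ is not a loop). The robust statement is the one you use: the intersection $F_1^j\cap F_1^k$ (equivalently the closure $\overline{\{i\}}$) is a non-empty flat contained in both, and minimality forces $F_1^j=F_1^k$. Similarly, for (ii) the paper argues that if $i$ lay in no $F_1^j$ then $\{i\}$ could not be a flat and hence $i$ would be a loop; again the implication ``$\{i\}$ is not a flat $\Rightarrow$ $i$ is a loop'' is false in the presence of parallel elements, whereas your argument --- $\overline{\{i\}}$ has rank $1$, any flat strictly inside it has rank $0$ and hence is empty, so $\overline{\{i\}}$ is itself one of the minimal non-empty flats and contains $i$ --- is airtight and immediately yields (ii) as well. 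In short, your identification of the minimal non-empty flats with the rank-$1$ flats (the parallel classes) is exactly the standard fact underlying the lemma, and your write-up is, if anything, more careful than the paper's own proof.
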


\begin{proof}
For (i) we observe that if $i\in F_1^j\cap F_1^k$ then, since there are no loops, $\{i\}$ would  also be  a flat but this would contradict the minimality of $F_1^j$ and $ F_1^k$.

If there exists $i\in \{0,\ldots,n\}$ that is not in $\bigcup _{j=1}^m F^j_1$ then $\{i\}$ can not be  a flat. This implies that  it is a loop but this is a contradiction since $M_{\Gamma}$ has no loops.
\end{proof}

\begin{proof}[Proof of Proposition \ref{Cay}]

Let $\Gamma$  be a tropicalized  linear space contained in $\trop (X\cap T^n)$ and $F^0_1,\ldots, F^m_1$ the non-empty minimal flats of $M_{\Gamma}.$ 
The ray $\pos(\textbf {e}_{F^i_1}) $ of  $\Gamma$ is contained in $\trop ( X\cap T^n)$ for all $i$  hence the    vectors $\textbf {e}_{F^0_1},\ldots,\textbf {e}_{F^m_1}$   are part of a set of generators for the linear space $\trop( X\cap T^n)$.  Lemma \ref{lines}   implies that  they are linearly independent vectors in $\mathbb R^{n+1}$. 
The linear span in $\mathbb R^{n+1}/\mathbb R\textbf 1$ of $\textbf {e}_{F^0_1},\ldots,\textbf {e}_{F^m_1}$  is equal to the linear span of  $\textbf {e}_{F^1_1},\ldots,\textbf {e}_{F^m_1}$ and $(1,\ldots,1)$. Hence we can assume that $\textbf {e}_{F^1_1},\ldots,\textbf {e }_{F^m_1}$ are the first $m$ rows of $A$ and $\textbf {e}_{F^0_1}$ is the unique among 
$\textbf {e}_{F^0_1},\ldots,\textbf {e}_{F^m_1}$
 with last coordinate equal to $1$.
The columns of $A$ are the points of $\mathcal A$ and by Lemma \ref{lines} they  can be  partitioned in $m+1$ sets $A_0,\ldots,A_{m}$. The set $A_i$, for $i=0,\ldots,m-1$, is given all  points whose coordinates $(p_0,\ldots,p_n)$ are such that $p_i=1$ and $p_j=0$ for all $0\le j\neq i\le m$. The set  $A_{m}$ is given by the points whose first $m$ coordinates are zero. We have that  $A_0\cup \ldots\cup A_m=\mathcal A$. In fact 
by  Lemma \ref{lines} for any $i$ there exists a unique $\tf e_{F_1^j}$ such that $(\tf e_{F_1^j})_i=1$. This implies for  each point $(p_0,\ldots,p_n)$ in $\mathcal A$ (equivalently each column of   $A$) there exists a unique $0\le i\le m$ such  that  $p_i=1$.  Since each $\tf {e}_{F_1^j}$ has at least one coordinate equal to $1$ we have that $A_0\cup\ldots\cup A_{m-1}\subseteq \mathcal A$. Moreover since the first $m$ rows of $A$ are $\textbf {e}_{F^1_1},\ldots,\textbf {e}_{F^m_1}$  we have that the last column of $A$ has first $m$ entries equal to zero. Hence $A_{m}\neq \emptyset$ and $\mathcal A=\cup _{i=0}^s A_i$. 
We define   $\pi:\mathcal A \to \Delta_m$ to be the map that sends  the points in $A_r$ to  $\tf e_{r+1}\in\mathbb Z^{m+1}$.  This map is an $m-$Cayley structure on $\mathcal A$. 
In fact let $\tf l\in L$ with $\tf l=l^{+}-l^{-}=\sum_{l_i>0}l_i\tf e_i-\sum_{l_i<0}l_i\tf e_i$ and $\{i:l_i\neq 0\}=\{i:\tf a_i\in \mathcal A\}$ then we have 
$$
\sum_{l_i>0,\tf a_i\in A_0} l_i \tf a_i+\ldots+\sum_{l_i>0,\tf a_i\in A_m} l_i \tf a_i=\sum_{l_i<0,\tf a_i\in A_0} -l_i \tf a_i+\ldots+\sum_{l_i<0,\tf a_i\in A_m} -l_i \tf a_i.
$$
We need to prove that 
\begin{equation*}
\sum_{l_i>0,\tf a_i\in A_0} l_i \pi(\tf a_i)+\ldots+\sum_{l_i>0,\tf a_i\in A_m} l_i \pi(\tf a_i)=\sum_{l_i<0,\tf a_i\in A_0} -l_i \pi(\tf a_i)+\ldots+\sum_{l_i<0,\tf a_i\in A_m} -l_i \pi(\tf a_i).
\label{eq1}
\end{equation*}
By definition of $\pi$ we have that
$$
\sum_{l_i>0,\tf a_i\in A_0} l_i \pi(\tf a_i)+\ldots+\sum_{l_i>0,\tf a_i\in A_m} l_i \pi(\tf a_i)=(\sum_{l_i>0,\tf a_i\in A_0} l_i ,\ldots,\sum_{l_i>0,\tf a_i\in A_m} l_i  )
$$
and 
$$
\sum_{l_i<0,\tf a_i\in A_0} -l_i \pi(\tf a_i)+\ldots+\sum_{l_i<0,\tf a_i\in A_m} -l_i \pi(\tf a_s)=(\sum_{l_i<0,\tf a_i\in A_0} -l_i ,\ldots,\sum_{l_i<0,\tf a_i\in A_m}- l_i  ).
$$

Consider  $(p_0,\ldots,p_n)=\sum_{l_i>0,\tf a_i\in A_0} l_i \tf a_i+\ldots+\sum_{l_i>0,\tf a_i\in A_m} l_i \tf a_i=\sum_{l_i<0,\tf a_i\in A_0} -l_i \tf a_i+\ldots+\sum_{l_i<0,\tf a_i\in A_m} -l_i \tf a_i$. 
The first coordinate $p_0$ is given by the first coordinate of $\sum_{l_i>0,\tf a_i\in A_0} l_i \tf a_i$ that is $\sum_{l_i>0,\tf a_i\in A_0} l_i $ or equivalently by  the first coordinate of $\sum_{l_i<0,\tf a_i\in A_0} -l_i \tf a_i$ that is $\sum_{l_i<0,\tf a_i\in A_0} -l_i $.

 From this we obtain  $\sum_{l_i>0,\tf a_i\in A_0} l_i= \sum_{l_i<0,\tf a_i\in A_0} -l_i $. In the same way we have $\sum_{l_i>0,\tf a_i\in A_1} l_i= \sum_{l_i<0,\tf a_i\in A_1} -l_i ,\;\ldots\;,\sum_{l_i>0,\tf a_i\in A_{m-1}} l_i= \sum_{l_i<0,\tf a_i\in A_{m-1}} -l_i$.

Since $p_n=\sum _{l_i>0}l_i=\sum _{l_i<0}-l_i$ we can also deduce that $$\sum_{l_i>0,\tf a_i\in A_m} l_i= \sum_{l_i<0,\tf a_i\in A_sm} -l_i $$ therefore 
 $$
 (\sum_{l_i>0,\tf a_i\in A_0} l_i ,\ldots,\sum_{l_i>0,\tf a_i\in A_m} l_i  )=(\sum_{l_i<0,\tf a_i\in A_0} -l_i ,\ldots,\sum_{l_i<0,\tf a_i\in A_m} -l_i  ).
 $$
\end{proof}

\begin{es}\label{excay}
Let $\mathcal A$ be the set given by the columns of the matrix $A$ where
$$A=\begin{pmatrix}
0&1&0&0&0\\
1&0&0&0&0\\
2&1&7&3&5\\
1&1&1&1&1\\
\end{pmatrix}.$$ 

The toric variety $X_{\mathcal A}$ is defined by the ideal $(x_2x_3-x_4^2)\subseteq \mathbb C[x_0,x_1,x_2,x_3,x_4]$. 
The tropical line $\Gamma_1$  spanned by  $(0,1,0,0,0)$  is contained in $\trop (X_{\mathcal A}\cap T^3)$. In the case of tropical lines the cones  $\pos(\textbf {e}_{F^0_1})+\mathbb R\textbf 1,\ldots,\pos(\textbf {e}_{F^m_1})+\mathbb R\textbf 1 $ are exactly the rays of $\Gamma$. We can define a $1-$Cayley structure associated to $\Gamma_1$ by sending the set $$A_0=\{(0,1,2,1),(0,0,7,1),(0,0,3,1),(0,0,5,1)\}$$ 
to $\tf e_0$ and  $A_1=\{(1,0,1,1)\}$  to $\tf e_1$. 

We also notice that the tropical line $\Gamma_2$ whose rays are $\pos(1,0,0,0,0),\pos (0,1,0,0,0)$ and $\pos(-1,-1,0,0,0)$ is contained in $\trop (X_{\mathcal A}\cap T^3)$. The $2-$Cayley structure associated to $\Gamma_1$ is the map sending  $A_0=\{(0,1,2,1)\}$ to $\tf e_0$, $A_1=\{(1,0,1,1)\}$ to $\tf e_1$,  $A_2=\{(0,0,7,1),(0,0,3,1),(0,0,5,1)\}$ to $\tf e_2$.
\end{es}

\begin{proof}[Proof of Theorem \ref{equality}]
We will prove that given a tropicalized  linear space  $\Gamma\subseteq \trop X$ there exists a linear space $\ell '$  in $X$ such that $\trop \ell'=\Gamma$. 

Assume  that $\Gamma$ is in $\trop (X\cap O)$ with $O$ an orbit of $\mathbb P^n$. We can consider $Y=\overline{X\cap O}$ as a subvariety of $\overline { O}\cong \mathbb P^s$ with $s=\dim \overline{O}$. The variety $Y$ is also a toric variety and we denote by $\mathcal A'$ the set of lattice points associated to it.

Suppose $M_{\Gamma}$ has $l+1$ minimal flats. By Lemma \ref{Cay} we have that there exists a $l-$Cayley structure $\pi$  on $\mathcal A'$. 
Let $Z_{\pi}$ be the subvariety of $\F_l(Y)$ associated to $\pi$ (see \cite[Section 3, Section 4]{I-Z}). This is  the closed torus orbit of the  linear space $L$  generated by   $\tf v_0,\ldots,\tf v_l\in \mathbb R^{s+1}$ where 
\begin{equation*}
(\tf v_j)_i=\left \{\begin{matrix} 1\text{ if } \pi(\tf a_j)=\tf e_1\\
0 \text{ else }
\end{matrix}\right. .
\end{equation*} 

Let $\Gamma'$ be  the translation of $\Gamma$ to the origin. There exists  a point $p$  in $\trop Y$ such that  $\Gamma=\Gamma'+p$. 
The    vectors  $\textbf {e}_{F^0_1},\ldots,\textbf {e}_{F^m_1}$   generate  a linear space $\mathcal L$ and $\Gamma\subseteq \mathcal L+p$. We have that $\mathcal L=L$.  In fact by definition of the $(\tf v_j)_i$  and by construction of $\pi$ in Lemma \ref{Cay} the matrix 
\begin{displaymath} \begin{pmatrix}
\tf v_1\\
\vdots\\
\tf v_{l}
\end{pmatrix}
\end{displaymath}
 is equal to the submatrix of $A$ given by the first $l$ rows.
The equations of $L$ are $\codim L $ binomials of type $x_i-x_j$ for pairs $(i,j)$ with $0\le i\neq j\le m$, 
 hence   $\trop L=L$. Moreover there exists $t\in T^{\dim Y} $ such that $\trop (t\cdot L)=\mathcal L+p$. 
 
 We show that $\Gamma$ is the tropicalization of a linear space in $t\cdot L$ hence in $Y$. Using the equations of $L$  we can choose $x_0,\ldots,x_{l} \in\{x_0,\ldots,x_s\}$ such that for any $q\in L$ we have  $q_i=q_j$ for $j\in\{0,\ldots,l\}$. This implies that the projection 
$\phi=\phi_{x_0,\ldots,x_{l} }: \mathbb P^s \to \mathbb P^l$ induces  an isomorphism between $L$ and $\mathbb P^l$.  Let $\psi^{-1} $ be its inverse.  Since $\phi$ and $\phi^{-1}$ are linear monomial maps then $\trop (\phi)=\phi$ and $\trop (\phi^{-1})=\phi^{-1}$. Consider the linear space $\phi(\Gamma')\subseteq \trop \mathbb P^l$. This linear space is realizable in $\mathbb P^l$, that is  there exists $\ell'\in\mathbb P^l$ such that $\trop \ell'=\phi(\Gamma')$.
Now $\phi^{-1}(\ell')\subseteq L\subseteq Y$ and    $\trop(\phi^{-1}(\ell'))=\trop (\phi^{-1})(\trop (\ell'))=\Gamma'$. If we consider $\ell=t\cdot \phi(\ell') $ then $\trop(\ell )=\Gamma$.

\end{proof}

\begin{es}
Consider the toric variety $X_{\mathcal A}$ of Example  \ref{excay}. We use the proof of Theorem \ref{equality} to compute the lines  $\ell_1,\ell_2$ in $X_{\mathcal A}$ that tropicalize  to $\Gamma_1$ and $\Gamma_2$ respectively.
The line $\ell_1$ is the line $L$ associated to the $1-$Cayley structure  $\pi_1$.
Its defining equations are $x_0-x_2=0,x_2-x_3=0,x_3-x_4=0$.
The tropical line $\Gamma_2$ is contained in the linear space $L$ defined by 
$x_2-x_3=0,x_3-x_4=0$. Consider the projection $\phi=\phi_{x_0,x_1,x_2}:\mathbb P^5\to \mathbb P^2$ then $\phi (\Gamma_2) $ is the tropical line in $\mathbb R^3/\mathbb R\tf 1$  with rays $\pos(1,0,0),\pos(0,1,0),\pos(0,0,1)$ and it is the tropicalization of the line  $V(x_0+x_1+x_2)$. Applying $\phi$ we get that $\ell_2$ is defined by $(x_0+x_1+x_2,x_2x_3-x_4^2,x_3-x_4)$. 
\end{es}

\section{Proof of Theorem \ref{Fanstructure} and Proposition \ref{fanProperty}}\label{proofThm}

In this section we prove Theorem \ref{Fanstructure} by showing that there exists  a polyhedral structure on 
each $\F_d(\trop X)\cap \mathcal O$.

The key point in the proof of Theorem \ref{Fanstructure} is  the identification of $\trop \Grp(d,n)\cap \mathcal O$ with the subfan of the secondary fan $\Sigma$ of the matroid polytope  $P_M$ (\cite[Definition 4.2.9 ]{M-S}). We see in the following paragraph that  $M$ is the uniform matroid associated to $\trop \Grp(d,n)\cap \mathcal O$.  The cones of this subfan are the intersection of $\trop \Grp(d,n)\cap \mathcal O$ with the cones of $\Sigma$ and the subdivisions associated to these cones are  the \textit{matroid subdivisions} (see \cite[\S 4.4 ]{M-S} for a definition).

The space $\trop \Grp(d,n)\cap  \trop T^{\binom{n+1}{d+1}-1}$ was first studied by Speyer and Sturmfels in \cite{Speyer} and  can be identified with a subfan of the secondary fan of the uniform matroid of rank $d+1$ on $\{0,1,\ldots,n\}$\cite[\S4.4]{M-S}. The same interpretation of  $\trop \Grp(d,n)\cap \mathcal O$ can be extended to the case  where  $\mathcal O$ is any orbit of
$ \trop \mathbb P^{\binom{n+1}{d+1}-1}$. This is done in
 the forthcoming paper of  Cueto and Corey \cite{CC}. 
In particular they show that 
$$\Grp(d,n)\cap \overline O\cong\Grp(1,n')\times  \prod_{j\in J} T^j$$ where $n'<n$ and $J\subseteq \mathbb N$ with $|J|<\infty$. The isomorphism between them is a  map $\psi=\pi\times f$ where $\pi$ is a projection and $f$ is a monomial map. Hence it is possible to consider the tropicalization of this map to get 
$$\trop \Grp(d,n)\cap \overline{ \mathcal O}\cong\trop \Grp(d,n')\times  \prod_{j\in J} \trop  T^j.$$
Let  $M'$ be the uniform matroid   of rank $d+1$ on $\{0,1,\ldots,n'\}$.  We can identify   $\trop \Grp(d,n)\cap \mathcal O$ with a product of a subfan of the secondary fan of $P_{M'}$ with  $ \prod_{j\in J} \trop  T^j=\mathbb R^{\sum_{j\in J} j}/\mathbb R\textbf 1$.

This identification induces a polyhedral structure on 
$\trop \Grp (d,n)$   given by the union of cones $C_{T}$ where each $T$ is a different matroid subdivision.
Consider $p$  in the relative interior   $C_T^{\circ}$ of $  C_T$ and the corresponding tropical linear space $\Gamma_p$. We say that  the \textit{combinatorial type} of $\Gamma_p$ is   $T$. If  $p$  is contained in $\in C_T\setminus C_T^{\circ}$ then
the combinatorial type of $\Gamma_p$ is $T'$ where $T'$ is the matroid subdivision associated to a cone $C_{T'}$ in the boundary of $C_T$ such that  $p\in C_{T'}^{\circ}$.  Note that if  $C_{T'}$ is in the boundary of $C_T$ then a cell  $\sigma$ in $C_{T}$ is either equal to a cell in $C_{T'}$ or it is obtained by subdividing a cell $\sigma'$ of $C_{T'}$. In the second case all the cells in the subdivision of $\sigma'$ are cells of in $C_T$.
 For the case of $\trop \Grp(1,n)$ instead of  $T$ one  considers  the corresponding  tree with $n'\le n$ labelled leaves. In fact in this case the polyhedral complex dual to the subdivision has the coarsest polyhedral structure.

 In what follows we call an  \textit{open polyhedron}   a set of the form $P\setminus \partial P$ where  $P$ is a polyhedron and $\partial P$ is its boundary. For example the open  square with vertices $(0,0,1),(1,0,1),(1,0,0),$ and $(0,0,0)$ in $\mathbb R^3$ is an open polyhedron.

\begin{proof}[Proof of Theorem \ref{Fanstructure}]
We prove that $\F_d(\trop X)\cap \mathcal O$ can be written as the union of finitely many polyhedra, denoted by $F_T$, and hence the common refinement of these polyhedra  is the polyhedral complex structure on $\F_d(\trop X)\cap \mathcal O$.

There are two  key points in the proof. The first is that   the complement of a polyhedron is the union of open polyhedra and second that the projection of  an open polyhedron is an open polyhedron. Secondly it is crucial to describe  the polyhedral structure of a tropical linear space from its Pl\"ucker coordinates. In the following we will start by showing this last point.

Let $T$ be a combinatorial type of tropical linear spaces associated to the relative interior  of  a cone $C_T\subseteq \trop \Grp(d,n)\cap \mathcal O$. Consider $p\in C_T$ then  the tropical linear space $\Gamma_p$ is a subcomplex of the dual complex to a subdivision $T'$ of $P_M$, where $M$ is the uniform matroid associated to $\trop \Grp(d,n)\cap \mathcal O$ and $C_{T'}$ is a face of $C_T$. This implies that
$\Gamma_p=\coprod_i C_i(p) $ and  each cell $C_i(p)$
 in $ \trop\mathbb P^n$ has the following form 
 $$\{x\in \trop\mathbb P^n:  A(i,T) x^t\le  \textbf {f}(p) \text{ and }    B(i,T) x^t= \textbf {g}(p)  \}$$  
where $A(i,T) $  and $B(i,T)$ are matrices  with entries in $ \mathbb R$ and $\textbf{f}(p),\textbf{g}(p)$ are vectors whose entries are linear forms  in the coordinates of $p$, that depend only on $T$ and not on $p$. Note that if $p\in C_T\setminus C_T^{\circ}$ then $p\in C_{T'}\subset C_T$ hence   some of the $C_i(p)$ might be  the same. These are dual to the cell of $T'$ that is subdivided in $T$.

We are now ready to  define  $F_T$. This is  the set 
$$
F_T=\{p\in   C_T  : \Gamma_p\subseteq \trop X  \}
$$
hence 
	$$\F_d(\trop X)\cap \mathcal O=\bigcup_{T}F_T
$$
	where the union is over all combinatorial types $T$ associated to the relative interior of the maximal cones of $\trop \Grp(d,n)\cap \mathcal O$.

The tropical linear space $\Gamma_p$ is contained in $\trop X$ if and only if for every $i$  we have $C^{\circ}_i(p)\subseteq \trop X$, that is
 
 $$
 F_T=\bigcap_i \{p\in C_T: C^{\circ}_i(p)\subseteq \trop X\}
 $$
where  $\Gamma_p=\coprod_i C^{\circ}_i(p)$ and  $C^{\circ}_i(p)$ is the relative interior of a   cell $C_i(p)$ of $\Gamma_p$. 
Denote by $F_{C_i}$ the set $\{p\in C_T: C^{\circ}_i(p)\subseteq \trop X\}$.  
We show that this set is open and is the union of open polyhedra.  

Consider the set 
 $$
 \tilde{F}_{C_i}:=\{(p,x)\in C_T\times \trop \mathbb P^{n} : x\in C^{\circ}_i(p)\text{ and } x\notin \trop X  \}\subseteq (\trop \Grp(d,n)\cap \mathcal O)\times \trop  \mathbb P^{n}.
 $$
 
Firstly we observe that  $x\notin\trop X$ if and only if $x$ is in the complement of any  cell of $\trop X$ that is
\begin{equation}\label{intsigma}
x\notin \trop X \Leftrightarrow x\in \bigcap_{\sigma \text{ cell of } \trop X} \sigma^{c}.
\end{equation}
The complement of a polyhedron is a union of open polyhedra hence  
the term on the left of  (\ref{intsigma}) is  the union of finitely many open polyhedra.

Since $$\tilde{F}_{C_i}=\{(p,x)\in C_T\times \trop \mathbb P^n :x\in C_i^{\circ}(p)\}  \cap \{(p,x)\in C_T\times \trop \mathbb P^n: 
x\notin \trop X \}$$ we obtain that $\tilde{F}_{C_i}$ is the union of finitely many open polyhedra. Moreover this is also the case for $\pi(\tilde{F}_{C_i})$ where $\pi$ is the projection $$\pi:\trop\Grp(d,n)\cap \mathcal O\times \trop \mathbb P^n\to \trop\Grp(d,n)\cap \mathcal O.$$ We can be describe $\pi(\tilde{F}_{C_i})$
in the following way 
$$ 
\pi(\tilde{F}_{C_i})=\{p\in C_T : \exists x\in C_i(p) \text{ such that } x\notin \trop X \}.
$$
The set $F_{C_i}$ is  the  complement of $\pi(\tilde{F}_{C_i})$ hence it is closed and it is  the union of finitely many polyhedra. This proves that $F_T$ is the union of finitely many polyhedra and hence the same holds for $\F_d(\trop X)\cap \mathcal O$.
\end{proof}

\begin{remark}\label{nonsurval}
It is not necessary to have a surjective  valuation $\val$.  Let $G=\val(\Bbbk)$ be the value group of $\val$ and assume $G\subsetneq \mathbb R$. Then for any   variety $X\subseteq \mathbb P^n$ we  have that each face of $\trop X$ is a $\val(\Bbbk)$-polyhedron, so it is defined by linear equalities and inequalities with coefficients in  $\val(\Bbbk)$. In particular if $\Gamma$ is a tropical linear space then the inequalities defining the cells have coefficients in  $\val(\Bbbk)$. This implies that the set $F_T$ is not a union of polyhedra but it is the intersection of this union with $\val(\Bbbk^*)^m$.  Let $\mathcal O$ be   an orbit of $\trop \mathbb P^{\binom{n+1}{d+1}-1}$ then  we  can define $\F_d(\trop X)\cap \mathcal O$ to be the Euclidean closure of $\bigcup_{T} F_T$.
\end{remark}

The structure of the tropical Fano scheme is strictly connected to the structure of the tropical variety $\trop X$.

\begin{proof}[Proof of Corollary \ref{fanProperty}]
The polyhedral structure on $ F_d(\trop X)\cap \mathcal O$ is the common refinement of the  $F_T$. In the case in which $\trop X\cap \mathcal O'$ is a fan we get that $F_T$ is the union of  finitely many cones for every $T$. This can be seen from the construction of each $F_T$ in the proof of Theorem \ref{Fanstructure}. Then the common refinement of these cones for every $T$ gives a fan structure on $ F_d(\trop X)\cap \mathcal O$ .
\end{proof}

\begin{bibdiv}
\begin{biblist}

\bib{Trop2}{article}{
      author={Am\'endola, Carlos},
      author={Kohn, Kathl\'en},
      author={Lamboglia, Sara},
      author={Maclagan, Diane},
      author={Smith, Benjamin},
      author={Sommars, Jeff},
      author={Tripoli, Paolo},
      author={Zajaczkowska, Magdalena},
       title={Computing tropical varieties in {M}acaulay2},
        date={2017},
     journal={arXiv:1710.10651},
}

\bib{barth}{article}{
      author={Barth, W.},
      author={Van~de Ven, A.},
       title={Fano varieties of lines on hypersurfaces},
        date={1978/79},
        ISSN={0003-889X},
     journal={Arch. Math. (Basel)},
      volume={31},
      number={1},
       pages={96\ndash 104},
         url={http://dx.doi.org/10.1007/BF01226420},
}

\bib{B-G-S}{article}{
      author={Birkmeyer, Anna~Lena},
      author={Gathmann, Andreas},
      author={Schmitz, Kirsten},
       title={The realizability of curves in a tropical plane},
        date={2017},
        ISSN={0179-5376},
     journal={Discrete Comput. Geom.},
      volume={57},
      number={1},
       pages={12\ndash 55},
         url={https://doi.org/10.1007/s00454-016-9816-0},
      review={\MR{3589056}},
}

\bib{I-C}{article}{
      author={Chan, Melody},
      author={Ilten, Nathan},
       title={Fano schemes of determinants and permanents},
        date={2015},
        ISSN={1937-0652},
     journal={Algebra Number Theory},
      volume={9},
      number={3},
       pages={629\ndash 679},
  url={http://0-dx.doi.org.pugwash.lib.warwick.ac.uk/10.2140/ant.2015.9.629},
}

\bib{CG}{article}{
      author={Clemens, C.~Herbert},
      author={Griffiths, Phillip~A.},
       title={The intermediate {J}acobian of the cubic threefold},
        date={1972},
        ISSN={0003-486X},
     journal={Ann. of Math. (2)},
      volume={95},
       pages={281\ndash 356},
         url={http://0-dx.doi.org.pugwash.lib.warwick.ac.uk/10.2307/1970801},
}

\bib{CC}{article}{
      author={Corey, Daniel},
      author={Cueto, Maria~Angelica},
       title={The compact tropical {G}rassmannian},
        date={2018},
     journal={{I}n {P}reparation},
}

\bib{Cox2}{book}{
      author={Cox, David~A.},
      author={Little, John~B.},
      author={Schenck, Henry~K.},
       title={Toric varieties},
   publisher={American Mathematical Soc.},
        date={2011},
}

\bib{Fano}{article}{
      author={Fano, Gino},
       title={Sulle superfici algebriche contenute in una variet{\`a} cubica
  dello spazio a quattro dimensioni},
        date={1904},
     journal={Atti Reale Accad. Sci. Torino},
      volume={39},
      number={1},
       pages={597\ndash 613},
}

\bib{F-S}{article}{
      author={Feichtner, Eva~Maria},
      author={Sturmfels, Bernd},
       title={Matroid polytopes, nested sets and {B}ergman fans},
        date={2005},
        ISSN={0032-5155},
     journal={Port. Math. (N.S.)},
      volume={62},
      number={4},
       pages={437\ndash 468},
}

\bib{GJ00}{misc}{
      author={Gawrilow, Ewgenij},
      author={Joswig, Michael},
       title={Polymake--- {A} framework for analyzing convex polytopes},
}

\bib{M-G}{article}{
      author={Gibney, Angela},
      author={Maclagan, Diane},
       title={Equations for {C}how and {H}ilbert quotients},
        date={2010},
        ISSN={1937-0652},
     journal={Algebra Number Theory},
      volume={4},
      number={7},
       pages={855\ndash 885},
  url={https://0-doi-org.pugwash.lib.warwick.ac.uk/10.2140/ant.2010.4.855},
}

\bib{M2}{misc}{
      author={Grayson, Daniel~R.},
      author={Stillman, Michael~E.},
       title={Macaulay2, a software system for research in algebraic geometry},
         how={Available at \url{http://www.math.uiuc.edu/Macaulay2/}},
}

\bib{Haque}{article}{
      author={Haque, Mohammad~Moinul},
       title={Tropical incidence relations, polytopes, and concordant
  matroids},
        date={2012},
     journal={arXiv:1211.2841v2},
}

\bib{I}{article}{
      author={Ilten, Nathan},
       title={Fano schemes of lines on toric surfaces},
        date={2016},
        ISSN={0138-4821},
     journal={Beitr. Algebra Geom.},
      volume={57},
      number={4},
       pages={751\ndash 763},
  url={http://0-dx.doi.org.pugwash.lib.warwick.ac.uk/10.1007/s13366-016-0294-6},
}

\bib{I-S}{article}{
      author={Ilten, Nathan},
      author={S\"uss, Hendrik},
       title={Fano schemes for generic sums of products of linear forms},
        date={2017},
     journal={arXiv:1610.06770},
}

\bib{I-Z}{article}{
      author={Ilten, Nathan},
      author={Zotine, Alexandre},
       title={On {F}ano schemes of toric varieties},
        date={2017},
     journal={SIAM journal on Applied Algebra and Geometry},
      volume={1},
      number={1},
       pages={152\ndash 174},
}

\bib{L-K}{article}{
      author={Larsen, Paul},
      author={Kir\'aly, Franz},
       title={Fano schemes of generic intersections and machine learning},
        date={2014},
        ISSN={0218-1967},
     journal={Internat. J. Algebra Comput.},
      volume={24},
      number={7},
       pages={923\ndash 933},
  url={http://0-dx.doi.org.pugwash.lib.warwick.ac.uk/10.1142/S0218196714500398},
}

\bib{M-S}{book}{
      author={Maclagan, Diane},
      author={Sturmfels, Bernd},
       title={Introduction to tropical geometry},
      series={Graduate Studies in Mathematics},
   publisher={American Mathematical Society, Providence, RI},
        date={2015},
      volume={161},
        ISBN={978-0-8218-5198-2},
}

\bib{Mulm}{article}{
      author={Mulmuley, Ketan~D.},
      author={Sohoni, Milind},
       title={Geometric complexity theory. {I}. {A}n approach to the {P} vs.
  {NP} and related problems},
        date={2001},
        ISSN={0097-5397},
     journal={SIAM J. Comput.},
      volume={31},
      number={2},
       pages={496\ndash 526},
  url={http://0-dx.doi.org.pugwash.lib.warwick.ac.uk/10.1137/S009753970038715X},
}

\bib{MURRE}{article}{
      author={Murre, J.~P.},
       title={Algebraic equivalence modulo rational equivalence on a cubic
  threefold},
        date={1972},
        ISSN={0010-437X},
     journal={Compositio Math.},
      volume={25},
       pages={161\ndash 206},
}

\bib{Nic}{article}{
      author={Nicaise, Johannes},
       title={Geometric invariants for non-archimedean semialgebraic sets},
        date={2018},
     journal={arXiv:1603.08732},
}

\bib{Speyer}{article}{
      author={Speyer, David},
      author={Sturmfels, Bernd},
       title={The tropical {G}rassmannian},
        date={2004},
     journal={Advances in Geometry},
      volume={4},
      number={3},
       pages={389\ndash 411},
}

\bib{Vig}{article}{
      author={Vigeland, Magnus~Dehli},
       title={Smooth tropical surfaces with infinitely many tropical lines},
        date={2010},
        ISSN={0004-2080},
     journal={Ark. Mat.},
      volume={48},
      number={1},
       pages={177\ndash 206},
  url={http://0-dx.doi.org.pugwash.lib.warwick.ac.uk/10.1007/s11512-009-0116-2},
}

\end{biblist}
\end{bibdiv}

\end{document}